\theoremstyle{plain}
\newcommand{\SSS}{\mathfrak{S}}
\newcommand{\AAA}{\mathcal{A}}
\newcommand{\LL}{\mathcal{L}}
\newcommand{\MM}{\mathcal{M}}
\DeclareMathOperator{\Des}{DES}
\DeclareMathOperator{\ddes}{Des}
\DeclareMathOperator{\des}{des}
\DeclareMathOperator{\asc}{asc}
\DeclareMathOperator{\aexc}{drop}
\DeclareMathOperator{\Aexc}{Drop}
\DeclareMathOperator{\Drop}{Drop}
\DeclareMathOperator{\depth}{ExcLen}
\DeclareMathOperator{\drp}{DescLen}
\DeclareMathOperator{\drop}{drop}
\DeclareMathOperator{\SUC}{SUC}
\DeclareMathOperator{\FIX}{FIX}
\DeclareMathOperator{\Fix}{Fix}
\DeclareMathOperator{\fix}{fix}
\DeclareMathOperator{\bAsc}{Asc_2}
\DeclareMathOperator{\basc}{asc_2}
\DeclareMathOperator{\Suc}{Suc}
\DeclareMathOperator{\suc}{suc}
\DeclareMathOperator{\EXC}{EXC}
\DeclareMathOperator{\Exc}{Exc}
\DeclareMathOperator{\exc}{exc}
\DeclareMathOperator{\inv}{inv}
\DeclareMathOperator{\dep}{ExcLen}
\DeclareMathOperator{\drops}{DescLen}
\DeclareMathOperator{\nexc}{iexc}
\DeclareMathOperator{\sign}{sign}
\newtheorem{theorem}{Theorem}
\newtheorem{corollary}[theorem]{Corollary}
\newtheorem{lemma}[theorem]{Lemma}
\newtheorem{proposition}[theorem]{Proposition}
\theoremstyle{definition}
\newtheorem{definition}[theorem]{Definition}
\newtheorem{example}[theorem]{Example}
\theoremstyle{remark}
\newtheorem{remark}[theorem]{Remark}
\title{Two bijections in the spirit of Foata's transformation fondamentale}
\author[1]{Umesh Shankar\thanks{\tt{204093001@iitb.ac.in, umeshshankar@outlook.com}}} 
\affil[1]{Department of Mathematics, Indian Institute of Technology, Bombay Mumbai 400076, India} 
\date{\today}
\begin{document}
\maketitle
\begin{abstract}
In this short paper, we give bijective proofs of two recent equidistribution results connecting cyclic and linear statistics in the spirit of the Foata's `transformation fondamentale'.
\end{abstract}
\textbf{\small{}Keyword:}{\small{} linear statistics, cyclic statistics, first fundamental transformation, excedance, descent}{\let\thefootnote\relax\footnotetext{The author is supported by the National Board for Higher Mathematics, India.}}{\let\thefootnote\relax\footnotetext{2020 \textit{Mathematics Subject Classification}. Primary 05A05, 05A19.}}
\section{Introduction}
For a positive integer $n$, let $[n]=\{1,2,\ldots,n \}$ and let $\SSS_n$ be the set of permutations of $[n]$. For $\pi=\pi_1,\pi_2,\ldots,\pi_n \in \SSS_n$, define its descent set $\Des(\pi)=\{i\in [n-1] \vert \ \pi_i>\pi_{i+1} \}$ and the number of descents $\des(\pi)=\vert \Des(\pi) \vert$. Define the set of excedances $\EXC(\pi)=\{i\in [n]\vert \pi_i > i \}$ and the number of excedances $\exc(\pi)=\vert \EXC(\pi)\vert$. 

The classical Eulerian polynomial is defined as $A_n(t)=\displaystyle\sum_{\pi\in \SSS_n} t^{\des(\pi)}$. It is a well known result of MacMahon \cite{Macmajor} that $A_n(t)=\displaystyle\sum_{\pi \in \SSS_n} t^{\exc(\pi)}$. A bijective proof was given by Dominique Foata and is referred to as the ``first fundamental transformation'' (see \cite{lothaire-combin-words}).

Statistics whose definition require the cycle decomposition of a permutation called cyclic statistics, while statistics that can be defined with just the one line notation of a permutation are called linear statistics. It is often the case that cyclic statistics are equidistributed with the linear statistics and the ``first fundamental transformation'' is often a useful tool to give a bijective proof such an equidistribution.

Define $\SUC(\pi)=\{ i\in [n-1] : \pi(i+1)-\pi(i)=1 \}$ and $\FIX(\pi)=\{ i\in [n-1] : \pi(i)=i \}$. Diaconis, Evans and Graham \cite{diaconis-pairs} showed the very surprising equidistribution of the statistics $\suc$ and $\fix$, the former being a linear statistic and the latter a cyclic statistic.
\begin{theorem}[Diaconis, Evans, Graham]\label{Graham}
   For each $I\subset [n-1]$, we have
    \begin{equation}
\#\{\pi\in \SSS_n: \SUC(\pi)= I\}
=\#\{\pi\in\SSS_n: \FIX(\pi)= I\}.
\end{equation}
\end{theorem}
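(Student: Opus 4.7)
The plan is to construct an explicit bijection $\phi:\SSS_n\to\SSS_n$ satisfying $\SUC(\pi)=\FIX(\phi(\pi))$ for every $\pi$, which immediately implies the theorem. My first step would be a pair of parallel ``contraction'' bijections that absorb $I$ on both sides. Fix $I\subseteq[n-1]$ and set $k=n-|I|$. Given $\pi$ with $\SUC(\pi)=I$, the set $I$ carves $[n]$ into $k$ maximal blocks of consecutive positions with lengths $(\ell_1,\ldots,\ell_k)$, and on each block $\pi$ is an increasing run of consecutive integers with some starting value $a_s$. If $\tau\in\SSS_k$ is defined by letting $\tau(s)$ be the rank of $a_s$ in $\{a_1,\ldots,a_k\}$, one checks that the ``non-merging'' condition $a_{s+1}\neq a_s+\ell_s$---precisely what distinguishes $\SUC(\pi)=I$ from $\SUC(\pi)\supseteq I$---is equivalent to $\SUC(\tau)=\emptyset$. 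Thus $\pi\mapsto\tau$ is a bijection $\alpha_I$ from $\{\pi\in\SSS_n:\SUC(\pi)=I\}$ onto $\{\tau\in\SSS_k:\SUC(\tau)=\emptyset\}$. Symmetrically, writing $[n]\setminus I=\{j_1<\cdots<j_k\}$, the standardization $\sigma\mapsto\sigma'$ with $\sigma'(r)=s$ iff $\sigma(j_r)=j_s$ yields a bijection $\beta_I$ from $\{\sigma\in\SSS_n:\FIX(\sigma)=I\}$ onto $\{\sigma'\in\SSS_k:\FIX(\sigma')=\emptyset\}$. Together, these reductions show that the theorem is equivalent to its $I=\emptyset$ case.

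The heart of the argument is then to exhibit, in the spirit of Foata's transformation fondamentale, a bijection $\Psi_k$ from succession-free permutations of $[k]$ to permutations of $[k]$ with no fixed point in $[k-1]$. I would attempt a cycle-reparenthesization rule: given a succession-free $\tau\in\SSS_k$, cut its one-line notation into canonical segments (for example at each left-to-right minimum, or at carefully chosen descents), view each segment as a cycle written in a prescribed canonical form, and then concatenate the parenthesized cycles to form the cycle notation of $\sigma'=\Psi_k(\tau)$. The inverse reads the cycle decomposition of $\sigma'$, lists the cycles in the chosen canonical order, and erases the parentheses, mirroring the ``parenthesize/unparenthesize'' step of Foata's original transformation. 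With the right canonical form, each breakpoint in $\tau$ where a succession fails should correspond to a position $i<k$ at which $\sigma'(i)\neq i$.

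The main obstacle will be selecting the canonical form so that both directions of $\Psi_k$ are unambiguous and the succession-free condition on $\tau$ translates transparently into $\FIX(\sigma')\cap[k-1]=\emptyset$. The matching counts $\#\{\tau\in\SSS_k:\SUC(\tau)=\emptyset\}=\#\{\sigma'\in\SSS_k:\FIX(\sigma')=\emptyset\}=D_k+D_{k-1}$, both obtained by the same inclusion-exclusion on the $k-1$ forbidden positions, guarantee that $\Psi_k$ exists but do not construct one. Once it is in hand, the desired bijection is $\phi=\beta_I^{-1}\circ\Psi_k\circ\alpha_I$, and the identity $\SUC(\pi)=\FIX(\phi(\pi))$ follows by direct tracing through the three maps.
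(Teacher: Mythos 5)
You should first note a mismatch of targets: the paper never proves Theorem~\ref{Graham} itself---it is quoted from Diaconis--Evans--Graham---and the paper's own work goes into the finer set-valued bijection of Theorem~\ref{thm: triple-set}, constructed globally on $\SSS_n$ in Section~\ref{first-bij} without ever fixing $I$. So your route is necessarily different from the paper's. Within your route, the reduction step is correct and cleanly argued: contracting the blocks forced by $I$ is a bijection onto succession-free permutations of $[k]$ (the boundary condition $a_{s+1}\neq a_s+\ell_s$ does translate exactly into succession-freeness of $\tau$, because the value-intervals of the blocks are disjoint), and deleting the fixed positions in $I$ is a bijection onto permutations of $[k]$ with no fixed point in $[k-1]$. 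Since Theorem~\ref{Graham} asserts only an equality of cardinalities, your closing observation already finishes a valid proof: for both reduced families, each subset $S$ of the $k-1$ forbidden events is satisfied by exactly $(k-|S|)!$ permutations, so the two inclusion--exclusions coincide and both counts equal $D_k+D_{k-1}$. This enumerative argument is more elementary than anything in the paper, at the price of producing no bijection and no refinement of the kind the paper is after.

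Measured against your own stated plan, however---an explicit $\phi$ with $\SUC(\pi)=\FIX(\phi(\pi))$---there is a genuine gap: the map $\Psi_k$, which you call the heart of the argument, is never constructed. The sketch (cut at left-to-right minima or ``carefully chosen descents,'' read segments as cycles) does not work as stated; for instance, with left-to-right-minima cutting, a fixed point of $\sigma'$ corresponds to a singleton segment, i.e.\ to two consecutive left-to-right minima of $\tau$, which has nothing to do with the absence of successions. You acknowledge this as ``the main obstacle'' and leave it unresolved, and invoking the equal counts to assert that $\Psi_k$ ``exists'' makes the bijective packaging redundant: after your reduction, the equality of counts \emph{is} the theorem, so the counting does all the work and the composition $\beta_I^{-1}\circ\Psi_k\circ\alpha_I$ adds nothing. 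Either present the argument honestly as reduction plus inclusion--exclusion, or, if you want a genuinely bijective proof, supply the missing map---e.g.\ via Foata's transformation as Diaconis--Evans--Graham do, or via the paper's bijection of Section~\ref{first-bij}, which handles all $I$ simultaneously and carries the stronger triple of set-valued statistics.
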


They gave three proofs of the result, one of them being a bijective proof that uses the ``first fundamental transformation''.
This result was recently generalised to a joint distribution by Ma, Qi, Yeh and Yeh \cite{ma2024joint}.
Given any $\pi\in\SSS_n$,  define
\begin{eqnarray*}
&{\rm Asc}_2(\pi)&=\{\pi_{i+1}: \pi_{i+1}\geqslant \pi_{i}+2, i\in [n-1]\},\\
&{\rm Des}(\pi)&=\{\pi_{i+1}: \pi_{i}>\pi_{i+1}, i\in[n-1]\},\\
&{\rm Suc}(\pi)&=\{\pi_{i+1}: \pi_{i+1}=\pi_{i}+1, i\in[n-1]\},\\
&\Aexc(\pi)&=\{\pi_{i}: \pi_i<i,~i\in [2,n]\},\\
&\widehat{\Exc}(\pi)&=\{\pi_i: \pi_i>i,~i\in [2,n]\},\\
&\widehat{\Fix}(\pi)&=\{\pi_i: \pi_i=i,~i\in [2,n]\}.
\end{eqnarray*}
Set $\aexc(\pi)=\#\Aexc(\pi),~\widehat{\exc}(\pi)=\#\widehat{\Exc}(\pi)$ and $\widehat{\fix}(\pi)=\#\widehat{\Fix}(\pi)$.
\begin{remark}
    The definitions of the sets $\ddes(\pi)$ and $\Des(\pi)$ are slightly different though their cardinalities are the same. The same is true for the sets $\SUC(\pi)$ and $\Suc(\pi)$.
\end{remark}
Ma et al showed the following.
\begin{theorem}[Ma, Qi, Yeh, Yeh]\label{NewEu}
The following two triple set-valued statistics are equidistributed over $\SSS_n$ for all $n$.
$$\left(\rm{Asc}_2,~\rm{Des},~\rm{Suc}\right),~\left(\widehat{\Exc},~\Aexc,~\widehat{\Fix}\right).$$
So we have
$$\sum_{\pi\in\SSS_n}x^{\basc(\pi)}y^{\des(\pi)}s^{\suc(\pi)}=\sum_{\pi\in\SSS_n}x^{\widehat{\exc}(\pi)}y^{\aexc(\pi)}s^{\widehat{\fix}(\pi)}.$$
Since $\widehat{\exc}+\widehat{\fix}$ is equidistributed with $\asc$ over $\SSS_n$, it is an Eulerian statistic.
\end{theorem}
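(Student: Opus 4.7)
The plan is to construct an explicit bijection $\Phi \colon \SSS_n \to \SSS_n$, $\pi \mapsto \sigma$, realizing the joint set-valued equidistribution $\bAsc(\pi) = \widehat{\Exc}(\sigma)$, $\ddes(\pi) = \Aexc(\sigma)$, and $\Suc(\pi) = \widehat{\Fix}(\sigma)$. In the spirit of Foata's first fundamental transformation, $\Phi$ will be built in two stages: a reduction that isolates the successions-versus-fixed-points correspondence, followed by an FFT-like core map on the reduced object.

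First I would carry out the reduction. Identify the \emph{succession blocks} of $\pi$, i.e., the maximal contiguous factors of the form $(a, a+1, \ldots, a+k)$; the non-initial letters of these blocks are exactly the values in $\Suc(\pi)$. Deleting those positions from $\pi$ yields a shorter word $\pi^\ast$ on the alphabet $A := [n] \setminus \Suc(\pi)$. A short contradiction argument shows $\pi^\ast$ is \emph{succession-free}: if two adjacent letters of $\pi^\ast$ differed by $1$, the positions between them in $\pi$ (all carrying $\Suc$ values) would either duplicate a value or force one of the two letters itself to lie in $\Suc(\pi)$. On the cyclic side I declare $\sigma(v) = v$ for every $v \in \Suc(\pi)$, building the equality $\Suc(\pi) = \widehat{\Fix}(\sigma)$ into the construction.

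Next, after relabeling $A$ order-preservingly by $[m]$ with $m = |A|$, I need a bijection $\Psi$ from succession-free permutations of $[m]$ onto permutations of $[m]$ with no fixed point in $\{2, \ldots, m\}$, sending $\bAsc \to \widehat{\Exc}$ and $\ddes \to \Aexc$ as sets. My plan for $\Psi$ is to mimic Foata's FFT: decompose the one-line notation of a succession-free word via a suitable maxima rule (for example, a left-to-right-maxima block decomposition), then interpret each block as a cycle of $\sigma$ under a prescribed intra-cycle rotation and inter-cycle ordering, with the convention $\sigma(1) = \pi_1$. The succession-free hypothesis is precisely what rules out length-one blocks at non-initial positions, which is what would force an unwanted element into $\widehat{\Fix}(\sigma)$.

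The main obstacle is pinning down the precise form of $\Psi$ and proving it preserves both set-valued statistics simultaneously. Classical Foata FFT preserves only the cardinalities $\des \leftrightarrow \exc$, not the set-valued refinements, so the reading convention must be sharpened. The critical checks will be the equivalences $v \in \bAsc(\pi) \iff v \in \widehat{\Exc}(\sigma)$ and $v \in \ddes(\pi) \iff v \in \Aexc(\sigma)$, which I plan to establish by tracking where each value $v$ lands in the cycle decomposition and comparing its new position with its predecessor in $\pi$. Once $\Psi$ is in hand, $\Phi$ is assembled by composing the reduction, $\Psi$, and the reinsertion of the declared fixed points; the triple equidistribution then follows, and the Eulerian consequence that $\widehat{\exc} + \widehat{\fix}$ is equidistributed with $\asc$ is an immediate specialization of the resulting generating-function identity.
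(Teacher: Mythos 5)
Your proposal is a strategy outline rather than a proof: the entire content of the theorem is concentrated in the map you call $\Psi$, and you never construct it. You yourself flag that the classical Foata transformation only matches the cardinalities $\des\leftrightarrow\exc$ and that ``the reading convention must be sharpened'' to preserve the set-valued statistics $\bAsc\to\widehat{\Exc}$ and $\ddes\to\Aexc$ while keeping the image free of fixed points in positions $2,\dots,m$ --- but sharpening that convention \emph{is} the theorem, and no candidate rule (which maxima, which rotation, which inter-cycle order, and why the recorded \emph{values} match) is given or verified. For comparison, the paper does not reduce modulo successions at all: it builds a single map on all of $\SSS_n$ by forming increasing words in which each value $x$ with $\pi^{-1}(x)\le x$ is appended immediately after the value $\pi^{-1}(x)-1$ (so values in $\widehat{\Fix}(\pi)$ automatically become successions and values in $\widehat{\Exc}(\pi)$ automatically become $2$-ascents), starting a new word at each value of $\Aexc(\pi)$, and then merging the words in an order governed by the $\drop$ bi-word; the work goes into lemmas showing each merge creates exactly one descent, whose value lies in $\Drop(\pi)$, so that $\Drop(\pi)=\Des(\Phi(\pi))$ and the other two set equalities are forced. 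Some such quantitative control of where the junction descents fall is exactly what your sketch lacks.

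There is also a secondary gap in the reduction itself. After deleting $\Suc(\pi)$ and relabelling $A=[n]\setminus\Suc(\pi)$ order-preservingly by $[m]$, the relabelled word need not be succession-free just because $\pi^\ast$ is (an ascent $a<b$ in $\pi^\ast$ with $b\ge a+2$ can become a succession after relabelling when all intermediate values lie in $\Suc(\pi)$), and conversely reinserting the deleted values after a generic succession-free word on $A$ can create successions outside $\Suc(\pi)$ (e.g.\ $A=\{1,3\}$, $S=\{2\}$, word $1,3$ reinserts to $1,2,3$). These two phenomena in fact cancel --- the compatible words are precisely those whose relabelling is succession-free --- but your write-up asserts neither direction, and without it the composite map is not visibly a bijection fiberwise over the set $S=\Suc(\pi)=\widehat{\Fix}(\sigma)$. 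So the architecture is salvageable, but as written both the fiberwise matching and, crucially, the core set-valued Foata-type bijection remain unproved.
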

Their proof uses the idea of grammatical labellings which is used by Chen and Fu \cite{chen2024grammar} give a grammar assisted bijective proof. As the first main result of this work, we give a direct proof of Theorem \ref{NewEu} in the spirit of the ``first fundamental transformation''. This is proven in Section \ref{first-bij}.
\begin{theorem}\label{thm: triple-set}
    There exists a bijection $\Phi:\SSS_n\mapsto \SSS_n$ such that $$\left(\rm{Asc}_2(\Phi(\pi)),~\rm{Des}(\Phi(\pi)),~\rm{Suc}(\Phi(\pi))\right)=~\left(\widehat{\Exc}(\pi),~\Aexc(\pi),~\widehat{\Fix}(\pi)\right).$$
\end{theorem}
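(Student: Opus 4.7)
The plan is to construct $\Phi$ explicitly by an algorithm modeled on Foata's first fundamental transformation. The construction has three stages: fixing the skeleton forced by $\widehat{\Fix}(\pi)$, ordering the blocks using $\widehat{\Exc}(\pi)$ and $\Aexc(\pi)$, and verifying bijectivity.

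First, I note that both triples in the theorem partition $[n]\setminus\{\pi_1\}$ (resp.\ $[n]\setminus\{\sigma_1\}$), so $\Phi$ must satisfy $\sigma_1=\pi_1=:a$. Writing $(X,Y,Z)=(\widehat{\Exc}(\pi),\Aexc(\pi),\widehat{\Fix}(\pi))$, the key structural observation is that $Z$ forces a ``block decomposition'' of $\sigma$: since $k\in\Suc(\sigma)$ iff $k-1$ immediately precedes $k$ in $\sigma$, the equivalence relation on $[n]$ generated by $k-1\sim k$ for $k\in Z$ partitions $[n]$ into maximal blocks of consecutive integers. Each such block must appear in $\sigma$ as a contiguous increasing run, and the block containing $a$ is placed first.

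Second, the non-initial blocks must be ordered so that each starts with a value $s$ satisfying $s\geq e+2$ (if $s\in X$) or $s\leq e-1$ (if $s\in Y$), where $e$ is the last value of the preceding block. I will specify the canonical ordering by processing the positions of $\pi$ in order: for $i=2,\ldots,n$, insert $\pi_i$ into the growing partial $\sigma$ at the canonical admissible position dictated by the classification of $\pi_i$. For $\pi_i\in Z$ the position is forced (immediately after $\pi_i-1$); for $\pi_i\in X$ or $Y$ I take, say, the rightmost admissible position so that the new predecessor of $\pi_i$ yields the correct classification. A key part of the construction is verifying that an admissible position always exists and that the insertion does not corrupt the classifications of previously placed values; this is done using a combinatorial invariant on the running partial $\sigma$ that is maintained throughout the procedure.

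The main obstacle will be bijectivity. Since Theorem~\ref{NewEu} already provides the cardinality equality, it would suffice to show $\Phi$ is injective, but I plan to verify bijectivity directly by describing $\Phi^{-1}$ symmetrically: given $\sigma$, parse its block decomposition and inter-block transitions to recover $(X,Y,Z,a)$, then reconstruct $\pi$ by reversing the insertion procedure. Correctness follows by induction on $n$: removing the value $\pi_n$ from $\Phi(\pi)$ should yield $\Phi$ applied to the truncation $\pi_1\cdots\pi_{n-1}$, and this is the inductive step. The technical heart is verifying this invariant, which amounts to showing that the insertion step at each $i$ can be uniquely identified from the final $\sigma$ by its interaction with the block structure.
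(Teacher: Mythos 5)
Your overall shape is reasonable (the first letter must be preserved, $\widehat{\Fix}(\pi)$ forces succession blocks, and the remaining block-starts must be realized as big ascents or descents), and this is indeed the spirit of the paper's construction. But the proposal has concrete gaps exactly where the work lies. First, the insertion procedure as specified is not well defined: you process positions $i=2,\dots,n$ of $\pi$ and, for $\pi_i\in\widehat{\Fix}(\pi)$, insert $\pi_i$ ``immediately after $\pi_i-1$'' --- but $\pi_i-1$ need not have been placed yet. For $\pi=3,2,1$ you must, at $i=2$, place the value $2$ right after $1$, while $1=\pi_3$ is only inserted later; so the algorithm stalls already for $n=3$. (The paper avoids this by processing \emph{values} $1,2,\dots,n$ in increasing order and by hard-coding the predecessor of every weak-excedance value $x$ to be $\pi^{-1}(x)-1$, which simultaneously records enough of $\pi$ to invert the map.) Second, the statement that ``an admissible position always exists and the insertion does not corrupt earlier classifications,'' to be shown via an unnamed ``combinatorial invariant,'' is precisely the technical heart of the theorem and is not supplied; in the paper this is the content of the lemmas about positions in the drop bi-word, which guarantee that each remaining block can be attached behind a word with strictly larger last letter, so that every attachment creates exactly one descent and $\ddes(\Phi(\pi))=\Drop(\pi)$, which then forces the other two set equalities.

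Third, the bijectivity argument is not viable as sketched. Since many permutations share the same triple $(\widehat{\Exc},\Aexc,\widehat{\Fix})$, the map must encode strictly more than the triple, and a ``rightmost admissible position'' rule gives no visible mechanism for recovering $\pi$ from $\Phi(\pi)$. The proposed induction is also not well formed: $\pi_1\cdots\pi_{n-1}$ is not a permutation of $[n-1]$, the value-based sets do not behave well under standardization, and deleting the value $\pi_n$ from $\Phi(\pi)$ changes the classification (descent/succession/big ascent) of the value that followed it, so there is no reason the truncated image equals $\Phi$ of the truncated permutation. To repair the argument you would need either an explicit inverse algorithm (the paper reconstructs $\pi$ from the ascent-block decomposition of $\Phi(\pi)$: within a block each letter maps the previous letter plus one, and the block-starts are matched to the available values $L(w)+1$ in a prescribed order), or a genuinely specified and proved invariant replacing the paper's bi-word lemmas; as written, existence, non-corruption, and invertibility are all asserted rather than proved.
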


On a different note, Reifegerste \cite{reifegerste-bi-incr} and Petersen and Tenner \cite{petersen-tenner-depth} independently , showed that the distribution of the ordered pairs $(\depth(\pi), \exc(\pi))$ and $(\drp(\pi), \des(\pi))$ coincide over $\SSS_n$, where $\depth(\pi)=\sum_{i\in \EXC(\pi)} (\pi_{i}
-i)$ and $\drp(\pi)=\sum_{i\in \Des(\pi)} (\pi_{i+1}-\pi_{i})$.
Reifegeste uses the ``first fundamental transformation'' while
Petersen-Tenner use a bijection of Steingrimmson \cite{indexed-perm-einar}.

Their result 
\cite[Proposition 1.1]{reifegerste-bi-incr} and 
\cite[Theorem 1.3]{petersen-tenner-depth} is as follows.

\begin{theorem}[Reifegerste and Petersen-Tenner]
  \label{thm:biv_distrib}
  For $n \geq 1$, 
  $$\sum_{\pi \in \SSS_n} q^{\depth(\pi)}t^{\exc(\pi)} = 
  \sum_{\pi \in \SSS_n} q^{\drp(\pi)}t^{\des(\pi)}$$
\end{theorem}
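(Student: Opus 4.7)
The plan is to adapt Foata's first fundamental transformation so that it preserves not only the number but also the lengths of excedances and descents. Concretely, I will define a bijection $\varphi : \SSS_n \to \SSS_n$ such that
\[
(\des(\pi), \drp(\pi)) = (\exc(\varphi(\pi)), \depth(\varphi(\pi))) \qquad \text{for every } \pi \in \SSS_n,
\]
from which the theorem follows immediately by summing $q^{\drp(\pi)} t^{\des(\pi)}$ over $\pi \in \SSS_n$.

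To define $\varphi$, I scan the one-line notation of $\pi$ from left to right and insert a separator immediately before each left-to-right maximum of $\pi$. This splits $\pi$ into blocks $B_1, B_2, \ldots, B_r$, and by construction each block $B_j = (a_1, a_2, \ldots, a_k)$ begins with a left-to-right maximum $a_1$, which is automatically the largest entry of $B_j$. I then declare $\varphi(\pi)$ to be the permutation whose cycle decomposition contains, for each such $B_j$, the cycle prescribed by $\varphi(\pi)(a_i) = a_{i-1}$ for $2 \le i \le k$ together with $\varphi(\pi)(a_1) = a_k$. The inverse map takes $\sigma \in \SSS_n$, lists each cycle of $\sigma$ starting at its maximum and continued by iterating $\sigma^{-1}$, arranges the cycles in increasing order of their maxima, and concatenates; this shows $\varphi$ is a bijection.

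The key computation takes place inside a single block $B = (a_1, \ldots, a_k)$. For $i \geq 2$ the position $a_i$ of $\varphi(\pi)$ holds the value $a_{i-1}$, so $a_i$ is an excedance of $\varphi(\pi)$ exactly when $a_{i-1} > a_i$, in which case its excedance length is $a_{i-1} - a_i$; this is precisely the condition and contribution of the internal descent of $\pi$ between the consecutive entries $a_{i-1}$ and $a_i$. The index $i = 1$ contributes nothing since $a_1 = \max B$ forces $\varphi(\pi)(a_1) = a_k < a_1$. Finally, since the first entry of $B_{j+1}$ is a left-to-right maximum of $\pi$ and therefore exceeds the last entry of $B_j$, no descent of $\pi$ straddles a block boundary. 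Summing over blocks gives $\des(\pi) = \exc(\varphi(\pi))$ and $\drp(\pi) = \depth(\varphi(\pi))$.

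The main obstacle is choosing the right variant of Foata's bijection. One may list each cycle starting at the maximum or the minimum, traverse it along $\sigma$ or along $\sigma^{-1}$, and order the cycles in several ways; most of these combinations still preserve $\exc \leftrightarrow \des$ but scramble the length statistics. The convention above (max first, then traversed via $\sigma^{-1}$, with cycles ordered by increasing maxima) is the one that forces the descent positions inside a block to coincide \emph{both in location and in magnitude} with the excedance positions of the associated cycle. Once that convention is pinned down, the verification reduces to the elementary bookkeeping above.
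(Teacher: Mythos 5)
Your construction is correct: the blocks cut at left-to-right maxima are indeed headed by their maxima, the map you define is inverted exactly by listing each cycle from its maximum along $\sigma^{-1}$ and ordering cycles by increasing maxima, and the descent/excedance correspondence you describe is a length-preserving bijection, so $(\des(\pi),\drp(\pi))=(\exc(\varphi(\pi)),\depth(\varphi(\pi)))$ and Theorem \ref{thm:biv_distrib} follows by summation. This is the same spirit as the paper, which does not prove this theorem directly (it cites Reifegerste and Petersen--Tenner, and notes Reifegerste's proof also uses the first fundamental transformation) but builds an essentially equivalent map in Section \ref{subsect-foata}: there the cycles are written with their smallest element last and ordered by increasing smallest element, which matches descents of $\Phi(w)$ with the \emph{non-excedances} of $w$, giving $\nexc(w)=\des(\Phi(w))$, and then the depth symmetry $\dep(w)=\dep(w^{-1})=\tfrac{1}{2}\sum_i|w(i)-i|$ (equivalently, composing with $w\mapsto w^{-1}$) is needed to pass from drops to excedances. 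Your convention (cycle maxima first, traversal by $\sigma^{-1}$) matches descents to excedances directly, position by position and length by length, so for this particular bivariate identity your argument is marginally more self-contained, needing no auxiliary symmetry lemma; otherwise the two are interchangeable variants of Foata's transformation.
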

This result in particular shows that for $n \geq 1$,
$\sum_{\pi \in \SSS_n} q^{\drp(\pi)} = 
\sum_{\pi \in \SSS_n} q^{\depth(\pi)}$.  
Sivasubramanian \cite{siva-exc-det}
studied the signed enumeration $\depth(\pi)$ over $\SSS_n$ and obtained interesting results. For $\pi \in \SSS_n$, define
$\inv(\pi) = |\{(i,j) : 1\le i<j\le n \mbox{ and } \pi_i > \pi_j \}|$. The following  was proved in \cite[Theorem 9]{siva-exc-det}.

\begin{theorem}[Sivasubramanian]
  \label{thm:sgn_exc}
  For $n \geq 1$, 
  $$\sum_{\pi \in \SSS_n} (-1)^{\inv(\pi)} q^{\depth(\pi)} = (1-q)^{n-1}.$$
\end{theorem}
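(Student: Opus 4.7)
The plan is to recognise the left-hand side as the determinant of an explicit matrix and then compute it by elementary row operations. Since $(-1)^{\inv(\pi)} = \sign(\pi)$ and
$$q^{\depth(\pi)} = \prod_{i \in \EXC(\pi)} q^{\pi_i - i},$$
the key observation is that the weight $q^{\depth(\pi)}$ factors as a product of \emph{positional} weights $\prod_{i=1}^{n} a_{i,\pi_i}$ once we set
$$a_{i,j} = \begin{cases} q^{j-i} & \text{if } j > i, \\ 1 & \text{if } j \leq i. \end{cases}$$
Writing $A = (a_{i,j})_{1 \leq i,j \leq n}$, this gives
$$\sum_{\pi \in \SSS_n} (-1)^{\inv(\pi)} q^{\depth(\pi)} = \det(A).$$

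The next step is to evaluate $\det(A)$ by performing the row operations $R_i \mapsto R_i - R_{i+1}$ for $i = 1, 2, \ldots, n-1$, in this order. These are legitimate and preserve the determinant, since at step $i$ the row $R_{i+1}$ has not yet been touched, and row $R_i$ was only used as a source at step $i-1$, never as a target. A direct computation using $q^{j-i} - q^{j-i-1} = q^{j-i-1}(q-1)$ shows that the new $i$-th row becomes
$$(\underbrace{0, \ldots, 0}_{i}, \; q-1, \; q(q-1), \; \ldots, \; q^{n-i-1}(q-1)),$$
while $R_n$ remains $(1,1,\ldots,1)$.

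Finally, I would expand the transformed determinant along the first column: the only nonzero entry is the $1$ in position $(n,1)$, contributing $(-1)^{n+1}$ times the determinant of the $(n-1) \times (n-1)$ minor obtained by deleting row $n$ and column $1$. That minor is upper triangular with every diagonal entry equal to $q-1$, so its determinant is $(q-1)^{n-1}$, yielding
$$\det(A) = (-1)^{n+1}(q-1)^{n-1} = (1-q)^{n-1},$$
as required. The only non-routine step is spotting the matrix $A$; the structural reason it works is that $\depth$ is a sum over positions of the local statistic $\max(\pi_i - i, 0)$, which is exactly what is needed for the weight to split multiplicatively as $\prod_i a_{i,\pi_i}$. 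Everything afterwards is mechanical linear algebra.
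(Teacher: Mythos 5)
Your proof is correct: the weight $q^{\depth(\pi)}$ does factor positionally as $\prod_i a_{i,\pi_i}$ with $a_{i,j}=q^{j-i}$ for $j>i$ and $1$ otherwise, so the signed sum is $\det(A)$; your row operations $R_i\mapsto R_i-R_{i+1}$ (each adding a multiple of another row, hence determinant-preserving regardless of order) produce the matrix you describe, and the cofactor expansion along the first column gives $(-1)^{n+1}(q-1)^{n-1}=(1-q)^{n-1}$, with the $n=1$ case checking trivially. Note that this paper does not prove Theorem \ref{thm:sgn_exc} at all; it is quoted from Sivasubramanian \cite{siva-exc-det} (Theorem 9), whose method is precisely signed enumeration via determinants, so your argument is a complete, self-contained proof in the same spirit as the original source rather than a genuinely different route.
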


The signed enumeration of $\drp$ statistic over $\SSS_n$ was done by Shankar and Sivasubramanian \cite{shankar2024canonical}. They showed, using a sign reversing involution, that enumerating $\drp$ along with sign over $\SSS_n$ also gave the same result. Their result is as follows.
\begin{theorem}[Shankar-Sivasubramanian]
  \label{thm:sgn_drp_univ} 
  For $n \geq 1$, we
  $$\sum_{\pi \in \SSS_n} (-1)^{\inv(\pi)} q^{\drp(\pi)} = (1-q)^{n-1}.$$
\end{theorem}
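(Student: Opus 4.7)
The plan is to exhibit a bijection $\Psi \colon \SSS_n \to \SSS_n$ that simultaneously sends $\drp$ to $\depth$ and preserves the sign $(-1)^{\inv}$. Given such a $\Psi$, substituting $\sigma = \Psi(\pi)$ into Theorem~\ref{thm:sgn_exc} yields
\[
\sum_{\pi \in \SSS_n}(-1)^{\inv(\pi)} q^{\drp(\pi)}
= \sum_{\sigma \in \SSS_n}(-1)^{\inv(\sigma)} q^{\depth(\sigma)}
= (1-q)^{n-1},
\]
and the theorem follows.

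The most natural candidate for $\Psi$ is Foata's first fundamental transformation $F$ (or equivalently its inverse), because Reifegerste's proof of Theorem~\ref{thm:biv_distrib} already shows $\depth(\pi) = \drp(F(\pi))$. The obstruction is purely one of sign: $F$ does not in general preserve $(-1)^{\inv}$. For instance, $\pi = 3142$ is the single $4$-cycle $(1,3,4,2)$ of sign $-1$, while $F(\pi) = 4213$ splits as $(1,4,3)(2)$ and has sign $+1$. Since $(-1)^{\inv(\pi)} = (-1)^{n-c(\pi)}$ where $c(\pi)$ is the number of cycles of $\pi$, the whole difficulty reduces to matching the parity of the cycle count across the bijection.

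I would therefore build $\Psi$ as a parity-correcting refinement of $F$: starting from Foata's canonical cycle form (each cycle written with its maximum first, cycles ordered by increasing maxima), one interleaves a local toggle that merges or splits a chosen pair of adjacent cycles in such a way that the cycle count shifts by one but the positions and heights of all descents in the unfolded one-line word are left untouched, so that $\drp$ is invariant. The toggle is applied precisely when it is needed to correct the cycle parity, and for $\Psi$ to be a bijection this local move must itself be an involution, so that the refinement is unambiguously reversible.

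The main obstacle is the design of the toggle: Foata's transformation carries the descent-valued statistic cleanly but has no reason to respect cycle parity, and constructing a local move that shifts the cycle count by exactly one while preserving every descent height is delicate. If this direct route proves too rigid, a viable backup is induction on $n$. Writing $g_n(q) = \sum_{\pi \in \SSS_n}(-1)^{\inv(\pi)} q^{\drp(\pi)}$ and splitting $\SSS_n$ by the position $k$ of the letter $n$, the permutations with $\pi_n = n$ contribute $g_{n-1}(q)$ (since $\inv$ and $\drp$ are unchanged on removing a terminal $n$); a direct calculation, using $\inv(\pi) = \inv(\pi') + (n-k)$ for $\pi' \in \SSS_{n-1}$ obtained by deleting $n$ and the explicit formula for $\drp(\pi) - \drp(\pi')$ in terms of $k$, $\pi'_{k-1}$, and $\pi'_k$, should then show that the remaining contributions sum to $-q\, g_{n-1}(q)$, yielding $g_n(q) = (1-q) g_{n-1}(q)$ and, with $g_1(q) = 1$, the claim.
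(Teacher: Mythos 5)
Neither of your two routes is carried far enough to constitute a proof, and the gaps sit exactly where the real work lies. For the first route: reducing Theorem~\ref{thm:sgn_drp_univ} to Theorem~\ref{thm:sgn_exc} via a sign-preserving bijection that transports $\drp$ to $\depth$ is a sound strategy in principle --- in fact such a map is precisely the second main result of this paper (Theorem~\ref{thm: sign-pres-bij}, Section~\ref{second-bij}), where the sign correction is achieved not by a cycle merge/split move but by partitioning $\SSS_n$ according to the first letter and composing Foata's map with left multiplication by $(1,2)$, which preserves both $\depth$ and $\drp$ on the set $B$ of permutations not starting with $1$ or $2$, with separate arguments for $A_1$ and $A_2$. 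Your sketch, however, never produces the ``toggle'': you do not exhibit a local move that changes the cycle count by one while leaving every descent height of the unfolded word unchanged, you give no reason such a move exists, and you yourself identify it as the main obstacle. Since that construction is the entire content of the reduction, route one is an announcement of a strategy rather than a proof. (Note also that the statement you are asked to prove is quoted in this paper from Shankar--Sivasubramanian, whose own argument is a sign-reversing involution, not a transfer to $\depth$.)

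The backup induction has a concrete flaw in the step you wave through. Writing $g_n(q)=\sum_{\pi\in\SSS_n}(-1)^{\inv(\pi)}q^{\drp(\pi)}$ and inserting $n$ into position $k$ of $\pi'\in\SSS_{n-1}$, the change in $\drp$ is $n-\pi'_k-\max(\pi'_{k-1}-\pi'_k,0)$ (and the sign changes by $(-1)^{n-k}$), but the claim that the non-terminal insertions contribute $-q\,g_{n-1}(q)$ is \emph{not} a ``direct calculation'': it fails permutation by permutation. For $n=3$ and $\pi'=12$ the insertions at positions $1,2$ contribute $q^2-q$ rather than $-q\cdot q^{\drp(\pi')}=-q$, while for $\pi'=21$ they contribute $q-q=0$ rather than $-q^2$; only the total over all $\pi'$ and $k$ equals $-q\,g_{2}(q)$. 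So the identity you need holds only after nontrivial cancellation across different $\pi'$ and $k$, and organizing that cancellation (for instance by a sign-reversing involution, which is exactly how the cited original proof proceeds) is the missing argument. As written, both routes leave the decisive step unproved.
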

Since the signed and unsigned distributions of $\depth$ and $\drp$ are equal, the distributions of the statistics over
the alternating group $\AAA_n$ and its complement $\SSS_n-\AAA_n$ are equal. This implies the existence of a sign preserving bijection in $\SSS_n$ that carries $\depth$ to $\drp$. We give this bijection as the second main result of this work. This is done in Section \ref{second-bij}.
\begin{theorem}\label{thm: sign-pres-bij}
    There exists a bijection $\Phi:\SSS_n \mapsto \SSS_n$ such that 
   $$\left(F(\Phi(\pi)),\sign(\Phi(\pi)),\depth(\Phi(\pi))\right)=\left(F(\pi),~\sign(\pi),\drp(\pi)\right).$$
where $F: \SSS_n \mapsto \mathbb{Z}$ takes a permutation to its first letter.
\end{theorem}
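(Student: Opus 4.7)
The plan is to construct $\Phi$ by induction on $n$, splitting on the value of $\pi_1$ and applying a Foata-type rearrangement to the tail $\pi_2 \pi_3 \ldots \pi_n$. The base case $n = 1$ is immediate. In the inductive step, assuming $\Phi_{n-1}$ has already been built with the desired properties, I would define $\Phi$ on each fiber $\{\pi \in \SSS_n : \pi_1 = k\}$ separately.

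When $\pi_1 = 1$, position $1$ contributes nothing to $\drp$, $\depth$, or $\inv$, so standardising the tail by subtracting $1$ from every entry yields a permutation $\sigma \in \SSS_{n-1}$ with $\drp(\pi) = \drp(\sigma)$, $\depth(\pi) = \depth(\sigma)$, and $\sign(\pi) = \sign(\sigma)$. I would simply set $\Phi(\pi)$ to be $1$ prepended to $\Phi_{n-1}(\sigma)$, with every entry of the latter shifted back up by $1$. When $\pi_1 = k \geq 2$, position $1$ contributes $k - 1$ to $\depth$ but only $\max(0, k - \pi_2)$ to $\drp$; the tail has to be rearranged so as to absorb the discrepancy. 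Here I would adapt Foata's first fundamental transformation to the word on positions $2, \ldots, n$: decompose it into cycles anchored by suitable extrema, and re-read them so that the descent-sum of the new tail, combined with the fixed first-letter contribution $k - 1$, equals $\drp(\pi)$.

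The main obstacle is sign preservation, since Foata's transformation is not sign-preserving in general. I would address this by working within each fiber $\{\pi \in \SSS_n : \pi_1 = k\}$ and using the sign-reversing involutions underlying Theorems~\ref{thm:sgn_exc} and \ref{thm:sgn_drp_univ}, which (after a fiberwise refinement that I would establish along the way) show that the signed distributions of $\drp$ and $\depth$ agree on each such fiber. To make the resulting bijection canonical I would compose the tail-Foata step with the Shankar--Sivasubramanian involution from Theorem~\ref{thm:sgn_drp_univ}; the unpaired permutations of that involution have a controlled form that can be matched directly with their $\depth$-counterparts. Verifying that the change in $\inv$ caused by the tail-Foata step is cancelled exactly by the involution, and that all three invariants $(F, \sign, \drp \mapsto \depth)$ transfer simultaneously, is the delicate bookkeeping at the core of the argument.
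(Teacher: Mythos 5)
Your plan is only worked out where the problem is easy, and it defers exactly the two points that constitute the actual proof. First, for $\pi_1=k\ge 2$ you never define the ``tail rearrangement'': the arithmetic does not decompose the way you suggest, since for the image permutation (which must start with $k$) the contribution of positions $2,\dots,n$ to $\depth$ is not the $\depth$ of a standardised tail --- a value $j<k$ in position $i\ge 2$ contributes $\max(0,j-i)$ and standardisation shifts values below and above $k$ differently --- so ``descent-sum of the new tail plus $k-1$'' is not a statistic you can hit by applying Foata to the word on positions $2,\dots,n$. The paper avoids this entirely: it applies a modified first fundamental transformation to the \emph{whole} permutation (each cycle rotated so its minimum is last, cycles ordered by increasing minima), and proves as a lemma that this map fixes the first letter and sends $(\nexc,\dep)$ to $(\des,\drops)$, using $\dep(w)=\tfrac12\sum_i|w(i)-i|$.

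Second, and more seriously, your sign-preservation mechanism is missing. Invoking the sign-reversing involutions behind Theorems~\ref{thm:sgn_exc} and~\ref{thm:sgn_drp_univ} cannot by itself produce a sign-preserving bijection between level sets of $\depth$ and $\drp$, and the ``fiberwise refinement'' you would need (equality of the \emph{signed} distributions of $\depth$ and $\drp$ on each set $\{\pi:\pi_1=k\}$) is precisely the content of the theorem, not something those global results give you; the known consequence of Theorems~\ref{thm:sgn_exc}--\ref{thm:sgn_drp_univ} is only equidistribution on $\AAA_n$ versus its complement, with no first-letter refinement. The paper's resolution is concrete: on the set $B$ of permutations with first letter at least $3$, left multiplication by $(1,2)$ preserves $F$, $\dep$ and $\drops$ and commutes with the modified Foata map, so one simply post-composes with $(1,2)$ whenever the sign is wrong; the fibers with first letter $1$ or $2$ (where this correction would destroy $F$) are handled by stripping a leading pattern with adjacent transpositions $t_i=(i,i+1)$, applying the $B$-bijection to the standardised remainder, and multiplying the transpositions back, each transposition shifting $\dep$ and $\drops$ by exactly $1$. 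Until you supply an explicit substitute for this $(1,2)$-commutation trick (or prove the signed fiberwise equidistribution independently), the proposal has a genuine gap at its central step.
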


% Let $i_1< \cdots< i_k$ be the indices in the set $\Exc(\pi)\cup \Fix(\pi)$. Then, $\omega:=\left(\begin{array}{cccc}
% i_1 & i_2 & \dots & i_k \\
% \pi_{i_1} & \pi_{i_2} & \dots & \pi_{i_k} 
% \end{array}\right)$ is the associated \emph{$\exc$-$\fix$} bi-word of the permutation. 

% Let $j_1<\cdots< j_{n-k}$ be the indices of the set $\Drop(\pi)$. Then, $\hat{\omega}:=\left(\begin{array}{cccc}
% j_1 & j_2 & \dots & j_{n-k} \\
% \pi_{j_1} & \pi_{j_2} & \dots & \pi_{j_{n-k}} 
% \end{array}\right)$ is the associated \emph{$\drop$} bi-word of the permutation. 
% An \emph{ascent block} 

\section{Definitions and preliminaries}
Let us get some definitions out of the way. The words we consider will be over the alphabet $\mathbb{N}_0=\{0,1,2,\dots\}$. 
We will require the notion of a bi-word.
\begin{definition}
    A \emph{bi-word} of length $n$ is a $2\times n$ matrix with entries in $\mathbb{N}_0$. 
\end{definition}
We associate the following bi-word to a permutation $\pi\in \SSS_n$.
\begin{definition}
    The $\drop$ bi-word of a permutation $\pi\in \SSS_n$ is the bi-word $$\left(\begin{array}{cccc}
i_1 & i_2 & \cdots & i_k \\
\pi_{i_1} & \pi_{i_2} & \cdots & \pi_{i_{k}} 
\end{array}\right)$$ whose first row is the set $\{ i\in [n] : i>\pi(i) \}=\{i_1,\dots,i_k\}$ where $n \ge i_1>\dots>i_k \ge 1$.
\end{definition}
Define the map $t_{\pi}:[n] \mapsto \{0\} \cup [n-1]$ by $t_{\pi}(x)=\pi^{-1}(x)-1$. This is bijection from $[n]$ to $\{0\}\cup [n-1]$ and so is invertible.

We require the definition of an ascent block of a permutation.
\begin{definition}
    An \emph{ascent block} or \emph{increasing run} of a permutation $\pi\in \SSS_n$ is a maximal contiguous subword $\pi_{i},\pi_{i+1}\dots,\pi_{i+k}$ such that $\pi_{i}
<\pi_{i+1}\dots<\pi_{i+k}$.   
\end{definition}
We also need the following definitions. 
\begin{definition}
    The \emph{inverse excedance} of a permutation $\pi\in \SSS_n$ , denoted by $\nexc(\pi)$, is given by $\nexc(\pi):=|\{ i\in [n]: \pi(i)<i \}|$.
\end{definition}
We will make use of the following notation. The first and last letter of a word $w$ is denoted by $F(w)$ and $L(w)$ respectively.

\section{Proof of Theorem \ref{thm: triple-set}}\label{first-bij}
We will describe a bijection $\Phi$ and it will be clear from the description that the sets $\widehat{\Exc}(\pi)$ and $\widehat{\Fix}(\pi)$ are being carried to subsets of $\bAsc(\Phi(\pi))$ and $\Suc(\Phi(\pi))$. Once we prove that the set $\Drop(\pi)$ is being taken exactly to the set $\Des(\Phi(\pi))$, the equalities $\widehat{\Exc}(\pi)=\bAsc(\Phi(\pi))$ and $\widehat{\Fix}(\pi)=\Suc(\Phi(\pi))$ are forced.
\subsection{Description of the bijection}
Let $\pi=\left(\begin{array}{cccc}
1 & 2 & \dots & n \\
\pi_1 & \pi_2 & \dots & \pi_n
\end{array}\right)$ be a permutation in $\SSS_n$ written in two line notation.  To get the corresponding permutation $\Phi(\pi)$, we apply the following steps.

\begin{enumerate}
\item Set $i\rightarrow 0$.
    \item Start with the word $w_1:=0 $. Start with the element $1$. If $\pi^{-1}(1)=1$, we append it to the word ending in $t_{\pi}(1)=0$ which is $w_1$. If $\pi^{-1}(1) > 1$, we start a new word with first letter $1$. Once the first $i$ letters are placed, we place $i+1$ as follows. If $\pi^{-1}(i+1) \le i+1$, then we append $i+1$ to the word ending in $t_{\pi}(i+1)$. If $\pi^{-1}(i+1) > i+1$, then we start a new word with starting letter $i+1$. 
    \item Let $w_1,w_2,\dots, w_{k+1}$ be the words formed at the end of Step 1, arranged in decreasing order of last letter. Set $\LL_i \rightarrow \{w_1,\dots,w_{k+1}\}$. 
    \item Let $i_1>\cdots>i_k$ be the indices in the set $\Drop(\pi)$. Then, form the corresponding $\drop$ bi-word defined by $$\omega_i:=\left(\begin{array}{cccc}
i_1 & i_2 & \dots & i_k \\
\pi_{i_1} & \pi_{i_2} & \dots & \pi_{i_{k}} 
\end{array}\right)$$
    \item   Let $w$ be the word with the largest first letter in $\LL_i$. By construction, $F(w)$ is an element of the bottom row of $\omega$. Let $\LL_i-\{w\}=\{w'_1,\dots,w'_k\}$ be the words arranged in descending order of last letter. If the position of $F(w)$ in the bottom row of $\omega_i$ is $Y$ from the left, then juxtapose $w$ with $w'_Y$ and define the new word $w''_Y:=w'_Yw$. If $j\neq Y$, define $w''_j=w'_j$. Finally, set $i\rightarrow i+1$.
    \item Set $\LL_i \rightarrow \{w''_1,w''_2,\dots,w''_k\}$ (arranged and relabelled in decreasing order of last letter) and set $$\omega_i \rightarrow \left(\begin{array}{ccccccc}
i_1 & i_2 & \dots & i_{Y-1} & i_{Y+1} & \dots& i_k \\
\pi_{i_1} & \pi_{i_2} & \dots & \pi_{Y-1}& \pi_{Y+1}& \dots & \pi_{i_{k}} 
\end{array}\right).$$ 
\item If $|\LL_i|\neq 1$, return to Step $4$. If $|\LL_i|=1$, the word in $\LL_i$ is our required permutation $\Phi(\pi)$.
\end{enumerate}

\begin{example}
    We show an example our transformation. Let $\pi=\left(\begin{array}{ccccccc}
1 & 2 & 3 & 4 & 5 & 6 \\
4 & 5 & 3 & 1 & 6 & 2
\end{array}\right) \in \SSS_6$ with $\Drop(\pi)=\{ 1,2 \}$, $\Exc(\pi)=\{ 5, 6 \}$ and $\Fix(\pi)=\{ 3 \}$
We start with the word $w_1:=0$. Since $\pi^{-1}(1)=4>1$, we start a new word $w_2:=1$. Now, we move on to $2$. 
Since $\pi^{-1}(2)=6>2$, we start a new word $w_3:=2$. Move on to $3$. Since $\pi^{-1}(3)=3$, we append to the word ending in $t_{\pi}(3)=2$. Therefore, $w_3= 2,3$. 
Move on to $4$. Since $\pi^{-1}(4)=1$, we append it to $w_1$ and have $w_1:=0,4$. In a similar way, we append $5$ to the word ending in $1$ and the element $6$ to 
word ending in $4$.
Therefore, the words are $w_1=0,4,6$, $w_2=1,5$ and $w_3=2,3$. We arrange these in descending order of last element to get the list $\LL=\{ w_1,w_2,w_3 \}$.

We have the associated $\drop$-biword $\omega=\left(\begin{array}{cc}
6 & 4  \\
2 & 1 
\end{array}\right)$
The largest element $2$ of the bottom row is in the first position from the right. Therefore, we concatenate the word $w_3$ starting with $2$ to the end of the first element in the list $\LL \backslash w_3$,
which is $w_1$. We have the new words $w_1=0,4,6,2,3$ and $w_2=1,5$. Our new list $\LL=\{ w_2,w_1 \}$ arranged in descending order of last letter and the bi-word $\omega=\left(\begin{array}{c}
4   \\
1  
\end{array}\right)$.

Now, we concatenate the word $w_2$ to $w_1$ to get our final permutation $\Phi(\pi)=0,4,6,2,3,1,5$. The permutation $\Phi(\pi)=\left(\begin{array}{ccccccc}
1 & 2 & 3 & 4 & 5 & 6 \\
4 & 6 & 2 & 3 & 1 & 5
\end{array}\right)$ in two line notation. Here, $\bAsc(\Phi(\pi))=\{ 5,6 \}$, $\Suc(\Phi(\pi))=\{ 3 \}$ and $\ddes(\Phi(\pi))=\{ 1,2 \}$.
\end{example}
\subsection{Constructing an inverse for the bijection}
Let $\sigma$ be a permutation that is written as a word with the letter $0$ attached to its front.

\begin{enumerate}
    \item  Suppose the word has $k$ descents, then break the word into $k+1$ ascent blocks and arrange them in a list $\LL\rightarrow w_1,w_2, \dots, w_{k+1}$ in descending order of their last letters. 
Let $w_i=a_{i,1},a_{i,2},\dots, a_{i,\ell(w_i)}$ where $i\in [1,k+1], a_{j}\in [n]$ and $\ell(w_i)$ is the length of the word $w_i$. 
Define $\Psi(\sigma)(a_{i,j}):= a_{i,j-1}+1$ for $1<j\le \ell(w_i)$ and $1\le i\le k+1$. 
\item The only $k$ elements left to be mapped are the first letters of the words $w_i$ (for $i \neq 1$) and the choices left for their images are $\MM \rightarrow \{ L(w_1)+1>\dots>L(w_{k+1})+1 \}$.
\item Let $w_{r_1}\in \LL$ be the word with largest first letter $\alpha_1$ and let $\LL\backslash w_{r_1}= \{ w'_1,\dots,w'_{n-k} \}$ be arranged in descending order of last letter. Then, $w_{r_1}$ followed some word $w'_{s_1} \in \LL\backslash w_{r_1}$ in the word $\sigma$. Now, set $\Psi(\sigma)(\alpha_1)=L(w_{s_1})+1$. 
Now, concatenate $w_{r_1}$ to $w'_{s_1}$ and and set $\LL\rightarrow \{ w'_{1},\dots,w'_{s_1}w_{r_1},\dots, w''_{n-k} \}$ whose last letters are arranged in descending order. Remove $L(w_{s_1})+1$ from the list and set $\MM \rightarrow \MM \backslash \{L(w_{s_1})+1\}$.
\item Repeat Step 3 till we have mapped every element of $[n]$.
\end{enumerate}

It is easy to observe that the steps are the reversal of the steps of $\Phi$ and that this map is indeed the inverse of the transformation $\Phi$.

\subsection{Properties of this correspondence}

% \begin{lemma}
% For $0\le i \le n-k$, let $w\in \LL_{i}$ be a word. Then, $F(w)\le L(w)$.
% \end{lemma}
% \begin{proof}
%     When $i=0$, the words are all increasing by construction.
%     Suppose this hypothesis is true for the words in $\LL_i$ for all $i\le m$, we prove it for $i=m+1$. All but one word in $\LL_{m+1}$ belong in $\LL_m$. Therefore, we just have to prove the claim for the one word that does not belong in $\LL_m$. This word was obtained by concatenating the word $w$ with largest first letter to a word, say $\Bar{w}$, in the list $\LL_{m}\backslash w$. We know that $F(\Bar{w}) < F(w)$. Otherwise, in a earlier stage, $\Bar{w}$ would have been concatenated behind some other word. Therefore, since $w\in \LL_m$, by the induction hypothesis, we have  $F(w)<L(w)$ which implies $F(\Bar{w})<F(w)<L(w)$. This finishes the proof.   
% \end{proof}
Let $\omega_0$ be the $\drop$ bi-word of $\pi$ and let $\LL_0$ be the list of words formed at the end of Step 1.
\begin{lemma}
    The top row of $\omega_0$ consists of the letters $L(w)+1$ where $w\in \LL_0$ and $L(w)\neq n$.
    \end{lemma}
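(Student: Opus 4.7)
The plan is to exhibit a recursive rule for the structure of each word $w \in \LL_0$ and read off its last letter directly. Concretely, I would show that if $s_j, s_{j+1}$ are consecutive letters of some word $w = s_1, s_2, \ldots, s_k \in \LL_0$, then necessarily $s_{j+1} = \pi(s_j + 1)$ and $\pi(s_j+1) > s_j$. This follows immediately from the appending rule in Step 1: the only way for $s_{j+1}$ to be appended to a word currently ending in $s_j$ is for $\pi^{-1}(s_{j+1}) - 1 = s_j$, i.e., $s_{j+1} = \pi(s_j+1)$; and the condition for that appending to actually take place, $\pi^{-1}(s_{j+1}) \le s_{j+1}$, then translates to $s_{j+1} > s_j$. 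To legitimately pin down $s_{j+1}$ as the unique next letter after $s_j$, one must also note that no other processed element can slip in between, since any intermediate $i$ would require $\pi^{-1}(i) - 1 = s_j$ and hence $i = \pi(s_j+1)$ by bijectivity of $\pi$.

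From this recursion I would deduce the following characterization: for $m \in \{0, 1, \ldots, n\}$, the element $m$ is the last letter of some word in $\LL_0$ iff either $m = n$ or $\pi(m+1) < m+1$, i.e., $m+1 \in \Drop(\pi)$. The forward direction is clear: if $L(w) = m \neq n$ then nothing was ever appended after $m$, so the sole candidate $\pi(m+1)$ must have failed the condition $\pi(m+1) > m$. For the converse, if $m+1 \in \Drop(\pi)$ then $\pi(m+1) < m+1$ forces $m \in [n-1]$, so $m$ lies in some word $w$; were $m$ not its last letter, the recursion would force the next letter to be $\pi(m+1) > m$, contradicting $\pi(m+1) < m+1$. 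Taking the $+1$-image over words with $L(w) \neq n$ then yields exactly $\Drop(\pi)$, which is the top row of $\omega_0$.

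I expect the main technical obstacle to be the uniqueness step in the recursion, namely ruling out that any other letter could get appended to $w$ between the placements of $s_j$ and $s_{j+1}$; the rest is a direct rereading of Step 1. A quick cardinality cross-check is also available: Step 1 produces $|\Drop(\pi)| + 1$ words in total (the seed word starting with $0$, together with one new word for each $x \in [n]$ with $\pi^{-1}(x) > x$, and these $x$ are in bijection with $\Drop(\pi)$ via $x \mapsto \pi^{-1}(x)$), exactly one of which ends in $n$, matching $|\Drop(\pi)|$ as required.
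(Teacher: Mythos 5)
Your proposal is correct and follows essentially the same route as the paper: both arguments simply unwind the appending rule of Step 2, noting that the only candidate to follow a letter $m$ in its word is $\pi(m+1)$ and that it is appended exactly when $\pi(m+1)\ge m+1$, so $m\neq n$ ends a word precisely when $m+1$ is a drop position. If anything, your write-up is slightly more complete than the paper's, which explicitly proves only the containment $\{L(w)+1 : w\in\LL_0,\ L(w)\neq n\}\subseteq\{i:\pi(i)<i\}$ and leaves the reverse inclusion to the (implicit) count of words, a cardinality check you carry out explicitly.
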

    \begin{proof}
        For some $w\in \LL_0$, suppose $L(w)+1 \neq n$  is not in the bi-word $\omega_0$, then it would have $\pi^{-1}(L(w)+1)\le L(w)+1$. This would mean we would attach the letter $t^{-1}_{\pi}(L(w)+1)$ to it and $L(w)$ would not be the last letter of $w$, which is a contradiction.
    \end{proof}

\begin{lemma}
    Let $x$ be an element of the bottom row of $\omega_0$, whose position from the left is $Y$. Then, there are at least $Y+1$ words in $\LL_0$ whose last letter $L(w)\ge x$. 
\end{lemma}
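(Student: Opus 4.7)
The plan is to chain the preceding lemma together with a single order-theoretic observation about the drop indices. That lemma identifies the top row $\{i_1,\ldots,i_k\}$ of $\omega_0$ with $\{L(w)+1 : w\in\LL_0,\ L(w)\ne n\}$, so rewriting it yields
\[
\{L(w) : w\in\LL_0\} \;=\; \{n\}\;\cup\;\{i_s - 1 : 1\le s\le k\}.
\]
The union is automatically disjoint, since $i_s\le n$ forces $i_s-1<n$. So exactly one word has last letter $n$, and the remaining $k$ last letters are in bijection with the drop indices.

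With this identification, counting words $w$ with $L(w)\ge x$ reduces to counting drop indices $i_s$ with $i_s - 1\ge x$ (equivalently $i_s>x$) and then adding one for the word ending in $n$, which always qualifies since $x\in[n]$. The main observation I would invoke is that each of the first $Y$ drop indices $i_1>i_2>\cdots>i_Y$ already exceeds $x$: indeed, $i_Y>\pi_{i_Y}=x$ because $i_Y$ is a drop index, and the decreasing ordering forces $i_s>i_Y>x$ for every $s\le Y$.

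Consequently the values $\{i_s - 1 : 1\le s\le Y\}$ contribute $Y$ distinct last letters all at least $x$, and the word ending in $n$ contributes one more last letter (distinct from all the $i_s-1$, since $n>i_s-1$). Together this gives at least $Y+1$ words in $\LL_0$ with last letter $\ge x$, as required. I do not foresee any serious obstacle here: the whole statement is essentially a bookkeeping consequence of the previous lemma combined with the defining inequality $\pi_{i_Y}<i_Y$ at a drop.
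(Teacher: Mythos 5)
Your proof is correct and follows essentially the same route as the paper: you invoke the preceding lemma to identify the last letters of words in $\LL_0$ with $\{n\}\cup\{i_s-1\}$, use the drop condition $\pi_{i_Y}<i_Y$ to get $i_1>\cdots>i_Y>x$, and add the word ending in $n$ to reach the count $Y+1$. No gaps to report.
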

\begin{proof}
Let $$\omega_0:=\left(\begin{array}{cccc}
i_1 & i_2 & \dots & i_k \\
\pi_{i_1} & \pi_{i_2} & \dots & \pi_{i_{k}} 
\end{array}\right)$$ with $i_1>i_2>\dots>i_k$ being the indices of $\Drop(\pi)$. Since $x$ is in the bottom row of $\omega_0$, we have $i_Y=\pi^{-1}(x)>x$. This implies that $i_1>\dots>i_Y>x$. From the previous lemma, we know that each $i_j$ is $L(w_j)+1$ for some word $w_j\in \LL_0$. Therefore, the words $w_1,\dots, w_Y$ are $Y$ words such that $L(w_j)\ge x$ for $1\le j\le Y$. Also, there is a word in $w$ in $\LL_0$ that ends in the letter $n$ which is not one of $w_1,\dots,w_Y$. This furnishes the required $Y+1$ words and finishes the proof.
\end{proof}

\begin{lemma}
    At the start of iteration $i$ of Step 5, let $w$, $\omega_i$ and $Y$ be the word with largest first letter of $\LL_i$, the bi-word and the position of $F(w)$ in the bottom row of $\omega_i$ from the left respectively. Then, at the end of iteration $i$ of Step 5, $w$ is juxtaposed behind a word $w'$ whose last letter $L(w')>F(w)$.  
\end{lemma}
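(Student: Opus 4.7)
The plan is to prove the lemma by induction on $i$, using a strengthened invariant that generalizes the previous lemma to every iteration.

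For each $i$, let $M_1^{(i)} > M_2^{(i)} > \cdots > M_{k-i+1}^{(i)}$ denote the last letters of the words in $\LL_i$ arranged in strictly decreasing order (they are distinct because each letter of $[n]$ lies in a unique word), and let $B_1^{(i)}, B_2^{(i)}, \ldots, B_{k-i}^{(i)}$ denote the bottom-row entries of $\omega_i$ read from left to right. The invariant I will maintain is $(\star)$: for every $\ell \in [1, k-i]$, $M_{\ell+1}^{(i)} \ge B_\ell^{(i)}$. The base case $i = 0$ is precisely the previous lemma.

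Granting $(\star)$ at stage $i$, the conclusion of the current lemma follows at once. Writing $x = F(w) = B_Y^{(i)}$, the instance $\ell = Y$ of $(\star)$ yields $M_{Y+1}^{(i)} \ge x$. Let $P^{*}$ denote the position of $w$ itself in $\LL_i$ arranged in decreasing order of last letter. The word $w'$ chosen in Step 5 is the $Y$-th entry of $\LL_i \setminus \{w\}$ in this same order, so $L(w')$ equals $M_Y^{(i)}$ if $Y < P^{*}$ and $M_{Y+1}^{(i)}$ if $Y \ge P^{*}$; in either case $L(w') \ge M_{Y+1}^{(i)} \ge x$. Since $x$ is the first letter of $w$, it cannot occur in $w'$, so $L(w') \ne x$, and therefore $L(w') > x$.

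It remains to push $(\star)$ from stage $i$ to stage $i+1$. Deleting the column at position $Y$ of $\omega_i$ gives $B_\ell^{(i+1)} = B_\ell^{(i)}$ for $\ell < Y$ and $B_\ell^{(i+1)} = B_{\ell+1}^{(i)}$ for $\ell \ge Y$. The merge replaces the pair $w, w'$ by the single word $w'w$ of last letter $L(w)$, so exactly one entry is deleted from the $M$-list: it is $M_Y^{(i)}$ in Case A ($Y < P^{*}$) and $M_{Y+1}^{(i)}$ in Case B ($Y \ge P^{*}$). In both cases the required inequalities $M_{\ell+1}^{(i+1)} \ge B_\ell^{(i+1)}$ reduce to shifted instances of the inductive hypothesis, with one exception: in Case A at $\ell = Y-1$ one needs $M_{Y+1}^{(i)} \ge B_{Y-1}^{(i)}$, and this is where the maximality of $x$ among the bottom-row entries of $\omega_i$ comes in, giving $B_{Y-1}^{(i)} < B_Y^{(i)} = x \le M_{Y+1}^{(i)}$.

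The main obstacle is isolating the correct invariant; once $(\star)$ is stated, the verification is routine bookkeeping of the column and last-letter deletions, with only the single boundary case in Case A requiring the extra fact $x = \max_\ell B_\ell^{(i)}$.
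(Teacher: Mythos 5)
Your proof is correct, but it is organized quite differently from the paper's. The paper fixes the word $w$ with largest first letter at iteration $i$ and traces the single letter $F(w)$ backwards through all earlier iterations, maintaining an ad hoc counting inequality (in terms of the positions $Y_0,Y_j$, the index $X$ of the rightmost larger bottom-row entry of $\omega_0$, and the iteration count $j$) for the number of words whose last letter is at least $F(w)$. You instead strengthen the previous lemma into a uniform dominance invariant $(\star)$ -- the $(\ell+1)$-st largest last letter in $\LL_i$ weakly dominates the $\ell$-th bottom-row entry of $\omega_i$, for every $\ell$ and every stage $i$ -- and propagate it forward by induction, after which the lemma at each iteration is an immediate consequence. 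Your route is cleaner: the invariant applies to all bottom-row entries simultaneously rather than to one tracked letter, the case analysis ($Y<P^*$ versus $Y\ge P^*$, with the single boundary index $\ell=Y-1$) is transparent bookkeeping, and it avoids the positional arithmetic that makes the paper's claim somewhat delicate; what the paper's version buys is that it reuses its Lemma on $\omega_0$ verbatim without introducing a new invariant. One small point you should make explicit: you use that $x=F(w)$ is the \emph{maximum} of the bottom row of $\omega_i$, not merely an element of it. This does hold, because the set of first letters of the words of $\LL_i$ other than the word beginning with $0$ coincides with the set of bottom-row entries of $\omega_i$ throughout (true at $i=0$, and each iteration removes exactly the letter $F(w)$ from both sets -- the merge removes it as a first letter, Step 6 removes its column), but since your Case A boundary step hinges on this maximality, the one-line verification belongs in the proof.
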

\begin{proof}
    For the first iteration, the claim follows from the previous lemma. For the $i^{th}$ iteration, let $Y_0$ be the position of $F(w)$ in $\omega_0$. Let $x$ the rightmost element in the bottom row of $\omega_0$ that is larger than $F(w)$. If such an element does not exist, then set $x=F(w)$. Let $X$ be the position of $x$ from the left in $\omega_0$. By the previous lemma, there are $X+1$ words in $\LL_0$ whose last letter is at least $x \ge F(w)$.
    
    For iteration $j < i$, let $z_j$ be the largest first letter among the words in $\LL_j$ and $Y_j$ be the position of $F(w)$ in $\omega_j$. 
    
    {\bf Claim:} At each iteration $j\le i$, the number of words in the list $\LL_j$ whose last letter is at least $F(w)$ is greater than $Y_0-Y_{j}-j+X+1$.
    \begin{proof}[Proof of Claim]
       When $j=0$, we have $X+1 \ge Y_0$, which we have already shown.
    Suppose the claim is true for iteration $j-1$, we show it for iteration $j$.
    For iteration $j$, if $z_{j-1}$ is to the left of $F(w)$ in the bottom row of $\omega_0$, then $Y_{j}=Y_{j-1}-1$ as the column corresponding to $z_{j-1}$ would be deleted in Step 6. By the hypothesis for iteration $j-1$, there are at least $Y_{0}-Y_{j-1}-(j-1)+X+1$ words with last letter greater than $F(w)$. One of them possibly gets appended by the word starting with $z_{j-1}$, reducing the available word by 1. However, we still have $Y_0-Y_{j-1}-(j-1)+X+1-1=Y_0-Y_j-(j-1)+X+1 \ge Y_0-Y_j-j+X+1$ words, finishing this case.
    For iteration $j$, if $z_{j-1}$ is to the right of $F(w)$ in the bottom row of $\omega_0$, then $Y_{j}=Y_{j-1}$ as the column removal corresponding to $z_{j-1}$ leaves the position of $F(w)$ from the left unaffected. By the hypothesis, we have $Y_0-Y_{j-1}-(j-1)+X+1$ words with last letter greater than $F(w)$. One of them possibly gets appended by the word starting with $z_{j-1}$, reducing the available word by 1. However, we still have $Y_0-Y_{j-1}-(j-1)+X+1-1=Y_0-Y_j-(j-1)+X+1-1=Y_0-Y_j-j+X+1$ words, finishing this case.
    This proves our claim.  
    \end{proof}
    By the claim, for $j=i$, we have that $\LL_i$ has at least $Y_0-Y-i+X+1$ words whose last letter is larger than $F(w)$. Since $i$ is at most $Y_0-Y+Y-X$ (the number of $z_j$ to the left in $\omega_0$ are exactly $Y_0-Y$ and the number of them to the right are at most $Y-X$), we have $Y_0-Y-i-X+1 \ge Y_0-Y-(Y_0-Y+Y-X)+X+1=Y+1$. This completes our proof. 
\end{proof}

\begin{proposition}
    For the bijection $\Phi$ and a permutation $\pi \in \SSS_n$, we have $\Drop(\pi)=\Des(\Phi(\pi))$.
\end{proposition}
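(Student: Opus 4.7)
The plan is to combine three ingredients already at hand: the bookkeeping in Step 1--2 of which letters can open a new word, the fact that every word produced in Step 1 is an ascent block, and the preceding lemma guaranteeing a strict inequality $L(w') > F(w)$ at every juxtaposition performed in Step 5.

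First I would observe that, per Step 1--2, a new word is opened with first letter $x$ precisely when $\pi^{-1}(x) > x$, i.e., precisely when $x \in \Drop(\pi)$. Since the initial word $w_1$ opens with $0$, the remaining $k$ words $w_2, \dots, w_{k+1}$ have first letters that exactly enumerate $\Drop(\pi)$; in particular,
\[
\{F(w_i) : 2 \le i \le k+1\} \;=\; \Drop(\pi).
\]

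Next I would note that each word constructed in Step 1 is strictly increasing: the rule appends $i+1$ to a word whose current last letter is $t_\pi(i+1) < i+1$. Therefore no descent of $\Phi(\pi)$ can sit inside one of the original ascent blocks, so all descents of $\Phi(\pi)$ lie at the $k$ junctions between the original blocks in the final concatenation. Deleting the leading $0$ introduces no descent either, since $0$ is less than the letter that follows it in $w_1$.

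Then I would invoke the preceding lemma: whenever a word $w$ gets juxtaposed behind a word $w'$ in some iteration of Step 5, we have $L(w') > F(w)$. Each of the $k$ junctions inside the final $\Phi(\pi)$ was produced by exactly one such appending step (even though the receiving block $w'$ may itself be a concatenation of earlier blocks, $L(w')$ is unchanged by later prepending to $w'$). Consequently each junction is a genuine descent, whose descent value in the sense of $\Des$ is $F(w)$. The set of descent values of $\Phi(\pi)$ is therefore $\{F(w_i) : 2 \le i \le k+1\}$, which by the first step equals $\Drop(\pi)$.

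The step I expected to be the main obstacle --- ruling out that two successive blocks fuse into one longer ascent run when juxtaposed --- has already been discharged by the preceding lemma, so the proof essentially reduces to assembling the three observations above.
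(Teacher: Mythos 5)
Your proposal is correct and follows essentially the same route as the paper: both arguments rest on the facts that the words in $\LL_0$ are increasing (so descents can only occur at junctions), that every junction created in Step 5 is a genuine descent by the preceding lemma ($L(w')>F(w)$), and that the first letters of the non-initial words are exactly the elements of $\Drop(\pi)$. The only cosmetic difference is in how the equality is closed: you identify the descent bottoms of $\Phi(\pi)$ directly with those first letters, whereas the paper combines the containment $\ddes(\Phi(\pi))\subseteq\Drop(\pi)$ with the cardinality count $\des(\Phi(\pi))=k=\drop(\pi)$ obtained from the recurrence $q(i)=q(i-1)+1$.
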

\begin{proof}
   By construction, the set $\ddes(\Phi(\pi))\subseteq \Drop(\pi)$. 
   Consider the following quantity: $q(i)=\sum_{w \in \LL_i} \des(w)$. By construction, we have $q(0)=0$ as all the words are increasing in $\LL_0$. By the previous lemma, each iteration of Step 5 creates exactly one descent in some word. Therefore, we have the following recurrence $q(i)=q(i-1)+1$. This implies that $q(k)=\des(\Phi(\pi))=k=\drop(\pi)$. This shows that the sets are equal, completing the proof.
\end{proof}
This completes the proof of Theorem \ref{thm: triple-set}.

\section{Proof of Theorem \ref{thm: sign-pres-bij}}\label{second-bij}
Before we describe the bijection, we will need the following lemma which we state without proof.
\begin{lemma}[Knuth and Petersen-Tenner]
\label{thm: taocp_lemma}
For $w\in \SSS_n$, we have the following: $\dep(w)=\dep(w^{-1})=\frac{1}{2}\sum_{i=1}^n |w(i)-i|$.
\end{lemma}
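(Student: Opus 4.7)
The plan is to prove the closed-form identity $\dep(w)=\frac{1}{2}\sum_{i=1}^n|w(i)-i|$ first, and then deduce $\dep(w)=\dep(w^{-1})$ as an immediate corollary. The key elementary observation is that since $w$ is a bijection of $[n]$, we have $\sum_{i=1}^n w(i)=\sum_{i=1}^n i$, hence the telescoping identity
\[
\sum_{i=1}^n\bigl(w(i)-i\bigr)=0.
\]
This is the only combinatorial input one needs; everything else is bookkeeping.

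First I would partition $[n]$ into three classes by the sign of $w(i)-i$: the excedance set $\EXC(w)$ where $w(i)>i$, the fixed-point set $\Fix(w)$ where $w(i)=i$ (which contributes zero), and a deficiency set $D(w)=\{i:w(i)<i\}$. Splitting the zero-sum identity above across these three classes yields
\[
\sum_{i\in\EXC(w)}\bigl(w(i)-i\bigr)=\sum_{i\in D(w)}\bigl(i-w(i)\bigr).
\]
Each of these two equal quantities is $\dep(w)$ by definition of $\dep$, while their sum is exactly $\sum_{i=1}^n|w(i)-i|$ (with fixed points contributing zero). Therefore $\sum_{i=1}^n|w(i)-i|=2\dep(w)$, which is the second equality in the lemma.

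With the closed form in hand, $\dep(w)=\dep(w^{-1})$ is just a change of variables: setting $j=w^{-1}(i)$, i.e.\ $i=w(j)$, gives
\[
\sum_{i=1}^n\bigl|w^{-1}(i)-i\bigr|=\sum_{j=1}^n\bigl|j-w(j)\bigr|=\sum_{j=1}^n\bigl|w(j)-j\bigr|,
\]
and dividing by two finishes the argument. The proof is essentially mechanical and I do not foresee a real obstacle; the one point requiring care is remembering to invoke the zero-sum identity to balance the positive and negative contributions to $\sum_i|w(i)-i|$, since without it the relation between $\dep$ and the $\ell^1$-displacement of $w$ is not visible.
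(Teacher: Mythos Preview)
Your proof is correct. The paper does not give its own proof of this lemma: it is explicitly stated ``without proof'' and attributed to Knuth and Petersen--Tenner, so there is no in-paper argument to compare against. Your zero-sum/partition argument is the standard one and works without issue.
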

 We partition $\SSS_n$ into three disjoint sets $A_1,A_2, B$. Define $A_1$ to be the set of permutations whose first letter to the permutation is $1$, $A_2$ to be the set of permutations whose first letter is $2$ and $B=\SSS_n-(A_1\cup A_2)$.
 
 We will give three separate sign preserving bijections: one from $A_1$ to $A_1$, one from $A_2$ to $A_2$ and one from $B$ to $B$. Together, they give a bijection on $\SSS_n$ that is sign preserving and carries $\dep$ to $\drops$.

\subsection{A bijection on $B$} \label{subsect-foata}

First, let us describe the bijection on $B$.

Let $w\in B$ be a permutation. Then, it is clear that $(1,2)w$ is also a permutation in $B$. Therefore, we can write $B$ as a union of disjoint doubleton sets $\lbrace w,(1,2)w \rbrace$. It should be noted that $\dep(w)=\dep((1,2)w)$ and $\drops(w)=\drops((1,2)w)$ for any $w\in B$.

We will construct a bijective map $\Phi: \SSS_n\mapsto \SSS_n$ with the additional property that $\Phi((1,2))w)=(1,2)\Phi(w)$ when $w\in B$. 

Suppose $w=w(1)w(2)\dots w(n)$ in the one-line notation and then, $w'=\Phi(w)$ is constructed using the following steps:
\begin{enumerate}
    \item We write $w$ as a product of cycles in the following way: $w=c_1\dotsm c_k$ where the cycle $c_i$ is of length $r_i$ and $c_i=(w(j_{i,1}),w(j_{i,2}),\dotsc,w(j_{i,r_i}))$.  
    \item We rotate and arrange the cycle $c_i$ such that the first letter of $c_i$ is $w(l_i)$ where $l_i$ is smallest among $j_{i,1},\dotsc,j_{i,r_i}$ (or equivalently, the last letter $l_i$ of the cycle is the smallest).
    \item Since the cycles commute, we move the cycles around and rename such that $l_1<l_2<\dotsb<l_k$.
    \item $w'=\Phi(w)$ is the permutation that gotten by removing the parentheses and reading from left-to-right.  
\end{enumerate}

For example, $w=8,9,1,6,2,4,3,7,5$ would be written as $(8\ 7\ 3\ 1)(9\ 5\ 2)(6\ 4)$. We remove the parentheses and get $\Phi(w)=8,7,3,1,9,5,2,6,4$. $\dep(8,9,1,6,2,4,3,7,5)=(8-1)+(9-2)+(6-4)=7+7+2=16$.
$\drops(\Phi(w))=\drops(8,7,3,1,9,5,2,6,4)=8-1+9-2+6-4=16$.

\begin{remark}
This bijection $\Phi$ is a slight modification of the Foata first fundamental transformation and can be considered equivalent to it.
\end{remark}
\begin{proposition}
    The map $\Phi: \SSS_n \mapsto \SSS_n$ described above is a bijection such that $\Phi(B)=B$.
\end{proposition}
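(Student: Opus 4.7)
The proposition asserts two things: that $\Phi$ sends $\SSS_n$ bijectively to itself, and that $\Phi(B) = B$. My plan is to handle bijectivity by exhibiting an explicit two-sided inverse $\Psi$, and to handle $\Phi(B) = B$ by proving the stronger fact that $F \circ \Phi = F$ on all of $\SSS_n$.

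For $F \circ \Phi = F$, the key observation is that among all cycles of $w$, the one containing $1$ has the strictly smallest minimum (namely $1$). Hence after Step~3 it is placed leftmost in the concatenation, and by Step~2 it has already been rotated so that $1$ is its last entry. The first entry of this rotated cycle is then the element directly following $1$ cyclically, which is $w(1)$ (or equals $1$ itself if $w(1)=1$ and the cycle degenerates to $(1)$). In either case, the leading letter of $\Phi(w)$ equals $w(1) = F(w)$. Since membership in $A_1$, $A_2$, $B$ depends only on the first letter, this gives $\Phi(A_i) \subseteq A_i$ and $\Phi(B) \subseteq B$; bijectivity then upgrades each inclusion to equality.

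For bijectivity, I would describe $\Psi \colon \SSS_n \to \SSS_n$ as follows. Given $v = v_1 \cdots v_n$, partition $v$ into blocks by iteratively cutting immediately after the position of the minimum of the currently-remaining suffix: the first cut comes just after the unique $i$ with $v_i = 1$; the next cut comes just after the minimum of the suffix beyond that; and so on until $v$ is exhausted. Interpret each block $v_a v_{a+1} \cdots v_b$ as a cycle $(v_a \, v_{a+1} \, \cdots \, v_b)$ (meaning $v_a \mapsto v_{a+1}$, \ldots, $v_b \mapsto v_a$), and let $\Psi(v)$ be the product of these disjoint cycles. By construction the blocks have exactly the two features that characterise the output of $\Phi$: each block ends with its own minimum, and these minima strictly increase from left to right. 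Hence $\Psi \circ \Phi = \mathrm{id}$ is immediate. For the opposite composition, uniqueness of such a partition is what is needed: any splitting of $v$ into blocks satisfying both features must put the overall minimum $1$ at the end of the first block, whence induction on the number of blocks forces the partition to be the one used above, giving $\Phi \circ \Psi = \mathrm{id}$.

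The main technical obstacle is this uniqueness statement underpinning $\Psi$: proving that the two features ``each block ends with its minimum'' and ``block minima strictly increase across blocks'' jointly determine a unique partition of $v$. The inductive base (forcing the first cut to land on the position of $1$) is short, and the inductive step is a clean rerun on the suffix. Once uniqueness is settled, the first-letter computation $F \circ \Phi = F$ then immediately delivers $\Phi(B) = B$, completing the proof.
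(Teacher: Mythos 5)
Your proposal is correct and follows essentially the same route as the paper: first-letter preservation ($\Phi(w)(1)=w(1)$, forced by Steps 2 and 3) gives $\Phi(B)\subseteq B$, and the inverse is the standard cut-at-right-to-left-minima map, which is exactly your iterated suffix-minimum partition. You simply spell out the uniqueness/verification details that the paper leaves implicit.
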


\begin{proof}
First order of business is to show that this permutation $\Phi(w)$ actually belongs to $B$. This is easily done as Steps $2$ and $3$ ensure that $\Phi(w)(1)=w(1)$. Therefore, if $w\in B$, then $\Phi(w)\in B$.

Given a permutation $w \in \SSS_n$, we can construct the inverse under $\Phi$ in the following manner:
In $w=w(1),\dots,w(n)$, let $i_k<\dots<i_1=n$ mark the positions of the right-to-left minima of $w$. Then, $w'=(w(1),\dots,w(i_k))(w(i_k+1),\dots,w(i_{k-1}))\dots(w(i_2+1),\dots,w(i_1))$ is the inverse element of $w$ under $\Phi$. Therefore, the restriction of $\Phi$ to $B$ is bijection on $B$.
\end{proof}

\begin{proposition} \label{prop-nexc-des}
   For $w\in \SSS_n$, $\nexc(w)=\des(\Phi(w)),\dep(w)=\drops(\Phi(w))$. Furthermore, $\Phi((1,2)w)=(1,2)\Phi(w)$ when $w\in B$. 
\end{proposition}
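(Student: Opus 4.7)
The plan is to handle the equidistribution statements $\nexc(w) = \des(\Phi(w))$ and $\dep(w) = \drops(\Phi(w))$ jointly via a descent-by-descent analysis of $\Phi(w)$, and then treat the intertwining relation $\Phi((1,2)w) = (1,2)\Phi(w)$ through a case analysis of the cycle structure of $w$ relative to the values $1$ and $2$.

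For the first two identities, rewrite the cycle block of $\Phi(w)$ corresponding to a cycle $c$ of $w$ with minimum $\ell = \min c$ and length $r$ as
$$w(\ell),\ w^2(\ell),\ \ldots,\ w^{r-1}(\ell),\ \ell.$$
I would first observe that no descent of $\Phi(w)$ can occur at a cycle boundary, since the last letter of block $c_i$ is $\ell_i$ while the first letter of block $c_{i+1}$ is $w(\ell_{i+1}) \geq \ell_{i+1} > \ell_i$. Within a block, consecutive letters take the form $(v, w(v))$ with $v = w^s(\ell)$ for some $1 \leq s \leq r-1$, so the transition is a descent if and only if $w(v) < v$. The non-terminal letters of all the cycle blocks together exhaust exactly $[n] \setminus \{\ell_1, \ldots, \ell_k\}$, and each $\ell_i$ satisfies $w(\ell_i) \geq \ell_i$ and is therefore never an anti-excedance of $w$. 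Hence the descents of $\Phi(w)$ are in natural bijection with $\{v \in [n] : w(v) < v\}$, giving $\des(\Phi(w)) = \nexc(w)$, and summing the descent drops $v - w(v)$ yields
$$\drops(\Phi(w)) = \sum_{v:\,w(v) < v} (v - w(v)) = \frac{1}{2}\sum_{i=1}^n |w(i) - i| = \dep(w)$$
by Lemma \ref{thm: taocp_lemma}.

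For the intertwining relation, assume $w \in B$, so $w(1) \notin \{1, 2\}$, and split on whether $1$ and $2$ lie in the same cycle of $w$. If they lie in distinct cycles, then $(1,2)w$ merges the two cycles of $w$ containing $1$ and $2$ into a single cycle while leaving all other cycles alone. I would write out the two corresponding cycle blocks of $\Phi(w)$ explicitly and check that swapping the labels $1 \leftrightarrow 2$ in their concatenation produces exactly the cycle block of $\Phi((1,2)w)$ for the merged cycle. If $1$ and $2$ lie in the same cycle of $w$, then $(1,2)w$ splits it into two cycles, and I would verify that the right-to-left minima of the swapped first block of $\Phi(w)$ land precisely at the new tail positions carrying $1$ and $2$, so that the Foata inverse separates the swapped word into the two blocks of $\Phi((1,2)w)$. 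In either case, the cycle blocks not involving $1$ or $2$ are left untouched by both $\Phi$ and the $(1,2)$ swap.

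The main obstacle is the bookkeeping in the third claim: tracking precisely where the labels $1$ and $2$ sit inside the relevant cycle block(s) of $\Phi(w)$ and matching the right-to-left minima of the swapped word to the modified cycles of $(1,2)w$. The hypothesis $w \in B$ is used crucially to exclude the degenerate subcase $w(1) = 2$, in which $2$ would be the very first letter of the cycle block of $\Phi(w)$ and the swap would interact awkwardly with the neighbouring cycle boundary. Claims concerning the $\nexc$--$\des$ and $\dep$--$\drops$ equalities, by contrast, are essentially recasts of the classical Foata correspondence under the convention that each cycle is written with its minimum at the \emph{end}, and follow from the descent analysis above.
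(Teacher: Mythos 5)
Your proposal is correct and follows essentially the same route as the paper: cycles written with their minimum last and ordered by increasing minima, no descents across block boundaries (your comparison $w(\ell_{i+1})\geq \ell_{i+1}>\ell_i$ is a slight streamlining of the paper's argument), within-block descents matching anti-excedances with equal drop so that $\nexc(w)=\des(\Phi(w))$ and $\dep(w)=\drops(\Phi(w))$ via Lemma \ref{thm: taocp_lemma}, and the same two-case analysis (1 and 2 in the same cycle versus different cycles) for $\Phi((1,2)w)=(1,2)\Phi(w)$. One small remark: the hypothesis $w\in B$ is not actually needed for the intertwining relation (the swap identity can be checked to hold even when $w(1)\in\{1,2\}$), but since the statement only asserts it on $B$, restricting as you do is harmless.
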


\begin{proof}
   First, we need to show that $\nexc(w)=\des(\Phi(w)),\dep(w)=\drops(\Phi(w))$.
It suffices to show that there are no descents in between the cycles. We know that the cycle $c_i$ ends with the letter $l_i$. By Step $2$ and $3$, every $m<l_i$ belongs to some cycle $c_{i'}$ with $i'<i$. If not, the cycle containing that element would've appeared before $c_i$ (due to Step $3$). This shows that there are no descents between the cycles. Since there are no descents between cycles, we only need to count the descents in a cycle. A cycle in the product is of the form $(w(l),w(w(l)),\dotsm,l)$. If $w^{(i)}(l)>w^{(i+1)}(l)$, there is a strict non-excedance of length $w^{(i)}(l)-w^{(i+1)}(l)$ at this position in $w$. Similarly, there is a strict descent of length $w^{(i)}(l)-w^{(i+1)}(l)$ at this position in $\Phi(w)$. Therefore, $\nexc(w)=\des(\Phi(w))$ and we can conclude by \eqref{thm: taocp_lemma} that $\dep(w)=\drops(\Phi(w))$. 

Second, we need to show that $\Phi((1,2)w)=(1,2)\Phi(w)$. In the cycle decomposition, either $2$ is in the first cycle and we write $w=(w(1), w(w(1)),\dots,2,w(2),\dots,1)c_2c_3\dots c_k$ or if it is the last element in the second cycle and we write $w=(w(1),\dots,1)(w(2),\dots,2)c_3\dots c_k$. In both cases, it is clear that switching the places of $1$ and $2$ in $w$ switches the position of $1$ and $2$ in the image.
\end{proof}

\begin{corollary}
     There exists a bijection $\phi:\SSS_n\mapsto \SSS_n$ such that $$(F(w),\nexc(w),\dep(w))=(F(\phi(w)),\des(\phi(w)),\drops(\phi(w))).$$
\end{corollary}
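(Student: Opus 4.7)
The plan is short: take $\phi = \Phi$, the bijection already constructed in Subsection \ref{subsect-foata}. Proposition \ref{prop-nexc-des} delivers the two statistic equalities $\nexc(w) = \des(\Phi(w))$ and $\dep(w) = \drops(\Phi(w))$ on all of $\SSS_n$, so the only content of the corollary not already proved is the first-letter equality $F(w) = F(\Phi(w))$. Verifying this is the only step I have to do.

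To check the first-letter equality I would simply unwind the four-step construction of $\Phi$. The value $1 \in [n]$ lies in the cycle of $w$ generated by the orbit $1 \mapsto w(1) \mapsto w^{(2)}(1) \mapsto \cdots \mapsto 1$, so in particular $w(1)$ belongs to this cycle. In Step 2 each cycle $c_i$ is rotated so that its minimum element is placed last; applied to the cycle containing $1$, this produces a string whose last symbol is $1$ and whose first symbol is therefore $w(1)$. In Step 3 the cycles are sorted by their minima $l_1 < l_2 < \cdots < l_k$, and since $l_1 = 1$ is the overall smallest, the cycle we just described is placed first. Step 4 strips the parentheses left to right, so the leading letter of $\Phi(w)$ is $w(1) = F(w)$.

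Combined with Proposition \ref{prop-nexc-des}, this proves the corollary. There is no genuine obstacle here; the corollary is a repackaging of the already-proved proposition together with the essentially trivial observation that the canonicalisation steps force the cycle containing $1$ to the front.
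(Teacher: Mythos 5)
Your proposal is correct and matches the paper's route: the corollary is stated as an immediate consequence of Proposition \ref{prop-nexc-des}, with the first-letter preservation $\Phi(w)(1)=w(1)$ already noted in the paper's earlier proof that $\Phi(B)=B$ (Steps 2 and 3 force the cycle containing $1$ to come first, ending in $1$ and beginning with $w(1)$). Your explicit verification of that fact, including the fixed-point case, is exactly the paper's argument, so nothing further is needed.
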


Thus, we have proved
\begin{proposition}
    The map $\widetilde{\Phi}: B\mapsto B$ defined by
    \begin{equation*}
    \widetilde{\Phi}(w) = \begin{cases}
              \Phi(w) & \text{if } \sign(w)=\sign(\Phi(w)),\\
               (1,2)\Phi(w) & \text{if } \sign(w)\ne \sign(\Phi(w)),
          \end{cases}
\end{equation*}
is a sign preserving bijection such that $\dep(w)=\drops(\widetilde{\Phi}(w))$.
\end{proposition}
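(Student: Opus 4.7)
The plan is to verify three things in sequence: that $\widetilde{\Phi}$ is well-defined as a map $B\to B$, that it is sign-preserving, and that it is a bijection carrying $\dep$ to $\drops$.

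First I would deal with well-definedness and sign-preservation, which are almost immediate. Since Proposition \ref{prop-nexc-des} gives $\Phi(B)=B$, the element $\Phi(w)$ lies in $B$. Left-multiplication by $(1,2)$ merely swaps the occurrences of the values $1$ and $2$ in one-line notation, which fixes the first letter whenever that letter is at least $3$; hence $(1,2)\Phi(w)$ also lies in $B$. Sign-preservation is then built into the two-case definition: in the second case the extra transposition is inserted precisely to flip $\sign(\Phi(w))$ back to $\sign(w)$.

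Next I would verify the statistic identity $\dep(w)=\drops(\widetilde{\Phi}(w))$. Proposition \ref{prop-nexc-des} already supplies $\dep(w)=\drops(\Phi(w))$, so the only remaining check is the invariance $\drops(u)=\drops((1,2)u)$ for $u\in B$. This was noted without proof at the start of Section \ref{subsect-foata}, and can be confirmed by a short case analysis on the positions of $1$ and $2$: because $u(1)\ge 3$, these positions lie in $\{2,\ldots,n\}$, and the contributions gained and lost at the affected adjacent windows telescope to zero. Both branches of the definition of $\widetilde{\Phi}$ then yield the desired equality.

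The bijectivity of $\widetilde{\Phi}$ is the step where I expect the main obstacle and where I would spend the most care. The strategy is to exploit the involution $w\mapsto (1,2)w$, which partitions $B$ into disjoint doubletons $\{w,(1,2)w\}$. Using the equivariance $\Phi((1,2)w)=(1,2)\Phi(w)$ from Proposition \ref{prop-nexc-des}, a short case analysis on whether or not $\sign(\Phi(w))$ matches $\sign(w)$ shows that $\widetilde{\Phi}$ maps the doubleton $\{w,(1,2)w\}$ onto the doubleton $\{\Phi(w),(1,2)\Phi(w)\}$ bijectively---acting as the identity on that pair in the good sign case, and as the pair-swap in the bad case. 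Since $\Phi$ is already a bijection on $B$ that commutes with the pair-swap $w\mapsto (1,2)w$, the image doubletons exhaust $B$, and the pair-by-pair bijections aggregate to a global bijection on $B$. Combined with the earlier two steps, this proves the proposition.
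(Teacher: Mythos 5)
Your proposal is correct and follows exactly the route the paper intends: the paper states this proposition as an immediate consequence of the preceding facts ($\Phi(B)=B$, $\dep(w)=\drops(\Phi(w))$, the equivariance $\Phi((1,2)w)=(1,2)\Phi(w)$, the invariance of $\drops$ under left multiplication by $(1,2)$ on $B$, and the doubleton decomposition of $B$), giving no further argument. Your write-up simply supplies the details the paper leaves implicit, with the pair-by-pair case analysis on $\{w,(1,2)w\}$ being precisely the intended mechanism for bijectivity and sign preservation.
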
 

\subsection{Bijections on $A_1$ and $A_2$}
First, let us describe the bijection $\Psi_1$ on $A_1$. Before we start, let $\iota$ be the standardisation map.

We start with a permutation $w$. Since $w\in A_1$, we have $w(1)=1$.
\begin{enumerate}
    \item Set $w'=w$ and $T=1$.
    \item Look at the smallest $i$ such that $w'(i)\ne i$. If $w'(i)-i>2$, move to Step 3. If $w'(i)=i+1$, then set $w':=(i,i+1)w'$ and $T:=T(i,i+1)$ and repeat Step 2. 
    \item Let $i$ be the smallest index such that $w'(i)\ne i$ and by Step 2, we have $w'(i)-i>2$. In one-line notation, $w'$ looks like $1,2,\dots,i-1,w(i),\dots, w(n)$. Define $w'':=w(i),\dots,w(n)$. Perform $\iota^{-1}(\widetilde{\Phi}(\iota(w'')))$ and concatenate at the end of $1,2,\dots,i-1$. This gives a permutation $\pi \in A_1$.
    \item Define $\Psi_1(w):= T\pi$.
\end{enumerate}

\begin{lemma}
    The map $\Psi_1: A_1\mapsto A_1$ is sign preserving bijection that carries $\dep$ to $\drops$.
\end{lemma}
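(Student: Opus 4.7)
The plan is to verify the three defining properties of $\Psi_1$ (maps $A_1$ to $A_1$, preserves sign, and carries $\dep$ to $\drops$) in turn, and then to construct an explicit inverse. The structural heart of the argument has already been done by $\widetilde{\Phi}$ on $B$, so the work reduces to tracking how Step 2's adjacent transpositions and Step 4's left multiplication by $T$ interact with the statistics. Since $w(1)=1$, the smallest index $i$ with $w'(i)\ne i$ in Step 2 is always $\ge 2$, so every transposition $(i,i+1)$ accumulated in $T$ fixes $1$; combined with the construction $\pi(1)=1$, this gives $(T\pi)(1)=1$, confirming that the image lies in $A_1$. For sign, each swap in Step 2 flips $\sign(w')$ and $\sign(T)$ simultaneously, keeping $\sign(T)\sign(w')=\sign(w)$ invariant; standardization is order-preserving and hence sign-preserving, and $\widetilde{\Phi}$ is sign-preserving on $B$, so $\sign(\pi)=\sign(w')$ and therefore $\sign(\Psi_1(w))=\sign(w)$.

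For the equidistribution I track $\dep$ through each stage. A short computation using Lemma~\ref{thm: taocp_lemma} shows that a single swap at position $i$ (with $w'(i)=i+1$) decreases $\dep(w')$ by exactly $1$, so after $k$ swaps $\dep(w')=\dep(w)-k$. Standardization preserves $\dep$ on the non-fixed tail of $w'$, $\widetilde{\Phi}$ carries $\dep$ to $\drops$ on $B$, and un-standardization preserves $\drops$, giving $\drops(\pi_{\mathrm{tail}})=\dep(w)-k$. The ascending fixed prefix $1,2,\ldots,i^*-1$ followed by $\pi(i^*)=w'(i^*)\ge i^*+2$ rules out any boundary descent, so $\drops(\pi)=\dep(w)-k$. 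It then remains to show $\drops(T\pi)=\drops(\pi)+k$. Here $T$ decomposes into disjoint ``forward'' cycles $c_\ell=(j_\ell,j_\ell+1,\ldots,j_\ell+r_\ell)$ with $\sum_\ell r_\ell=k$, each arising from a maximal run of consecutive swaps. Since $T$ permutes only the values $\{1,\ldots,i^*\}$ of $\pi$, a case analysis shows that each $c_\ell$ increases $\drops$ by exactly $r_\ell$: if $j_\ell+r_\ell<i^*$ a fresh descent of size $r_\ell$ appears at position $j_\ell+r_\ell$; in the borderline case where one cycle of $T$ extends all the way to $i^*$, the value $i^*$ sitting at position $k^*=\pi^{-1}(i^*)>i^*$ gets mapped to $j_\ell$, enlarging the existing descent at $k^*-1\to k^*$ by exactly $r_\ell$. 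Summing gives $\drops(T\pi)=\dep(w)$.

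Finally I exhibit an inverse: given $\sigma\in A_1$, let $i^*$ be the smallest position $\ge 2$ with $\sigma(i^*)-i^*\ge 2$ (or $\infty$ if no such position exists). The forward cycles of $T$ are read off from the maximal ascending runs $\sigma(j)=j+1,\ldots,\sigma(j+r-1)=j+r$ with $j\ge 2$ inside positions $\{2,\ldots,i^*\}$, using the rule that a run terminating at position $i^*-1$ with value $i^*$ signals a cycle of $T$ extending all the way to $i^*$. Setting $\pi:=T^{-1}\sigma$, applying $\widetilde{\Phi}^{-1}$ after standardization to recover $w''$, and reconstructing $w=Tw'$ exactly reverses the procedure. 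The main obstacle is the case analysis underlying $\drops(T\pi)-\drops(\pi)=k$, and its mirror image ensuring the unambiguous recovery of $T$ in the inverse; the delicate point is the borderline scenario where the last forward cycle of $T$ extends to $i^*$, since there the new descent contributed by that cycle does not appear at position $i^*-1$ (as it would in $T$ alone) but is instead absorbed into a descent of $\pi_{\mathrm{tail}}$ further to the right.
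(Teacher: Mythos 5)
Your proof is correct and follows essentially the same route as the paper: each Step 2 swap lowers $\dep$ by exactly one (via Lemma \ref{thm: taocp_lemma}), $\widetilde{\Phi}$ applied to the standardized tail converts $\dep$ to $\drops$, left-multiplication by $T$ restores the $k$ that was removed, and the sign bookkeeping is identical. The only difference is that you make explicit two points the paper compresses or omits: the cycle-by-cycle verification that $\drops(T\pi)=\drops(\pi)+k$ (including the borderline cycle reaching $i^*$, which deepens an existing tail descent rather than creating a new one, where the paper simply asserts each transposition adds one), and an explicit inverse establishing bijectivity, which the paper's proof of this lemma does not address at all.
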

\begin{proof}
We shall define $t_i:=(i,i+1)$. Therefore, $T=t_{i_1}\dots t_{i_k}$ and by Step 2, we have $i_1<i_2<\dots<i_k$. In Step 2, we multiply by $k$ transpositions (one for each of the $k$ iterations) and once again, in Step 4, we multiply by $k$ transpositions and Step 3 doesn't change sign. Therefore, the map is sign preserving.\\
At the $r^{th}$ iteration of Step 2, $w'$ would be of the form $\begin{pmatrix}
1 & 2 & \cdots & i_r-1 & i_r & \cdots & y & \dots \\ 1 & 2 & \cdots & i_r-1 & i_r+1 & \cdots & i_r & \cdots
\end{pmatrix}$ for some $y\ge i_r+1$. Notice that applying $t_{i_r}$ (swapping $i_r$ and $i_r+1$) here decreases both $\dep$ and $\drops$ exactly by 1. Therefore, when we move to Step 3, $\dep(w')=\dep(w)-k=\dep(w'')$. Now, $\drops(\Phi(w''))=\dep(w'')=\dep(w)-k$. When we multiply by $T$, each transposition increases $\drops$ and $\dep$ of $\pi$ by 1 and therefore, $\drops(T\pi)=\drops(\Phi(w''))+k=\dep(w)-k+k=\dep(w)$, which finishes the proof.      
\end{proof}
Now, we can define the bijection $\Psi_2$ on $A_2$.

\begin{lemma}
    The map $\Psi_2:A_2\mapsto A_2$ defined by $\Psi_2(w):=(1,2)\Psi_1((1,2)w)$ is a sign preserving bijection that carries $\dep$ to $\drops$.
\end{lemma}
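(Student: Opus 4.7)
The plan is to exploit the already-established properties of $\Psi_1$ together with the observation that left-multiplication by the transposition $(1,2)$ gives an involutive bijection $A_1 \leftrightarrow A_2$: if $w(1)=2$ then $((1,2)w)(1)=1$, and vice versa. Well-definedness and bijectivity of $\Psi_2$ are then immediate, since $\Psi_2$ is the composition $A_2 \xrightarrow{(1,2)\,\cdot} A_1 \xrightarrow{\Psi_1} A_1 \xrightarrow{(1,2)\,\cdot} A_2$ of three bijections. Sign-preservation is also automatic: left-multiplication by $(1,2)$ flips the sign and $\Psi_1$ preserves it, so the sign is flipped exactly twice in $\Psi_2$.

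The only substantive point is the equality $\dep(w) = \drops(\Psi_2(w))$, and for this I would first establish an auxiliary claim: for every $x \in A_1$,
\[
\dep((1,2)x) = \dep(x) + 1
\qquad \text{and} \qquad
\drops((1,2)x) = \drops(x) + 1.
\]
The $\dep$ identity is immediate from Lemma~\ref{thm: taocp_lemma}: writing $b = x^{-1}(2) \geq 2$, the permutations $x$ and $(1,2)x$ differ only at positions $1$ and $b$, and at both of these $|w(i) - i|$ increases by exactly $1$ (from $0$ to $1$ at $i=1$ and from $b-2$ to $b-1$ at $i=b$), so $\dep$ grows by $\tfrac{1}{2}(1+1)=1$. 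The $\drops$ identity I would verify by a quick two-case check on $b$: if $b=2$, a new descent of size $1$ appears at position $1$ of $(1,2)x$ and no other descent contribution changes; if $b>2$, all values at positions $2,\ldots,b-1$ and $b+1,\ldots,n$ exceed $2$, so the only affected descent is at position $b-1$, contributing $x(b-1)-2$ in $x$ but $x(b-1)-1$ in $(1,2)x$, a net increase of $1$.

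Granting the auxiliary claim, the main equality drops out by a short chain. For $w \in A_2$ set $x = (1,2)w \in A_1$, so $\Psi_1(x) \in A_1$ and $w = (1,2)x$; then
\begin{align*}
\drops(\Psi_2(w))
&= \drops\bigl((1,2)\,\Psi_1(x)\bigr)
 = \drops\bigl(\Psi_1(x)\bigr) + 1 \\
&= \dep(x) + 1
 = \dep((1,2)x) = \dep(w),
\end{align*}
using the auxiliary claim on $\Psi_1(x) \in A_1$ in the second equality, the lemma for $\Psi_1$ in the third, and the auxiliary claim on $x \in A_1$ in the fifth.

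The main (though modest) obstacle I anticipate is the case analysis for the $\drops$ shift, since one must carefully track the descent at position $b-1$ (and at position $1$ when $b=2$) as the unique $2$ in the one-line notation of $x$ migrates to position $1$ and the unique $1$ migrates to position $b$; everything else reduces to a formal composition argument.
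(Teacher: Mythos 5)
Your proposal is correct, and since the paper explicitly omits the proof of this lemma as ``easy to check,'' what you have written is precisely the intended verification: bijectivity and sign-preservation follow formally from composing $\Psi_1$ with the involution $u\mapsto(1,2)u$ between $A_1$ and $A_2$, and the statistic identity reduces to your auxiliary claim that swapping the values $1$ and $2$ in a permutation fixing $1$ raises both $\dep$ (via Lemma~\ref{thm: taocp_lemma}) and $\drops$ (via the local check at positions $1$, $b-1$, $b$) by exactly $1$. The case analysis for the $\drops$ shift is handled correctly, so there is nothing to add.
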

We omit the proof of the lemma as it is easy to check that this satisfies the required properties.
Combining all these, we get
\begin{theorem}
    The map $f:\SSS_n \mapsto \SSS_n$ defined by
    \begin{equation*}
    f(w) = \begin{cases}
              \widetilde{\Phi}(w) & \text{if } w\in B,\\
               \Psi_1(w) & \text{if } w\in A_1,\\
               \Psi_2(w) & \text{if } w\in A_2.\\
          \end{cases}
\end{equation*}
is a sign preserving bijection such that $\dep(w)=\drops(f(w))$
\end{theorem}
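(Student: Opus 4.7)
The final theorem is essentially a packaging statement: all the substantive work has already been done in the three preceding lemmas/propositions, and what remains is to observe that the three bijections can be glued together along a partition of $\SSS_n$. My plan is therefore just to verify that the gluing is consistent.

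First I would observe that the three sets $A_1$, $A_2$, and $B$ form a set partition of $\SSS_n$ by construction: every permutation has a well-defined first letter $w(1)$, and $A_1$, $A_2$, $B$ correspond respectively to $w(1) = 1$, $w(1) = 2$, and $w(1) \geq 3$. Thus the map $f$ is well-defined. Next, from the already-proved statements: $\widetilde{\Phi}$ restricts to a sign-preserving bijection $B \to B$ carrying $\dep$ to $\drops$; $\Psi_1$ is such a bijection $A_1 \to A_1$; and $\Psi_2$ is such a bijection $A_2 \to A_2$. In particular, the three image sets $\widetilde{\Phi}(B)$, $\Psi_1(A_1)$, $\Psi_2(A_2)$ coincide with $B$, $A_1$, $A_2$ respectively and hence are pairwise disjoint.

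To conclude bijectivity of $f$, I would simply argue that a map defined piecewise on a partition of the domain, each piece of which is a bijection onto a piece of the corresponding partition of the codomain, is itself a bijection. An inverse is obtained by gluing the three inverses $\widetilde{\Phi}^{-1}$, $\Psi_1^{-1}$, $\Psi_2^{-1}$ in the same way. Sign preservation is inherited piece-by-piece, as is the identity $\dep(w) = \drops(f(w))$, since these identities hold on each block and every $w \in \SSS_n$ lies in exactly one block.

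There is no real obstacle here; the only thing worth a sentence of care is checking that $\Psi_2$ really does map $A_2$ into $A_2$ (so that the piecewise definition is consistent with the case analysis). For $w \in A_2$ one has $(1,2)w \in A_1$, hence $\Psi_1((1,2)w) \in A_1$, and multiplying again by $(1,2)$ on the left sends the first letter from $1$ back to $2$, so $\Psi_2(w) \in A_2$ as required. With that verified, the theorem follows immediately from the three earlier results.
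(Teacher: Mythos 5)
Your proposal is correct and follows the same route as the paper, which states this theorem as an immediate consequence of the three preceding results (``Combining all these, we get'') with no further argument needed. The gluing argument and the quick check that $\Psi_2$ maps $A_2$ into $A_2$ are exactly the routine verifications the paper leaves implicit.
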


\bibliographystyle{acm}
\end{document}